\newcommand{\card}{\operatorname{Card}}
\newcommand{\ignore}[1]{}
\newcommand{\m}{\mathbb}
\DeclareMathOperator*{\cupp}{\cup}
\DeclareMathOperator*{\capp}{\cap}
\begin{document}

\title{General limit value in zero-sum stochastic games
}


\author{Bruno Ziliotto    
}


\institute{Bruno Ziliotto \at
                CEREMADE, Universit\' e Paris Dauphine, Place du Mar\' echal de Lattre de Tassigny, 75016 Paris, France. \\
                         \email{ziliotto@math.cnrs.fr
}          
}


\maketitle
\bibliographystyle{abbrvnat}
\begin{abstract} 
\cite{BK76} and \cite{MN81} have respectively proved the existence of the asymptotic value and the uniform value in zero-sum stochastic games with finite state space and finite action sets. In their work, the total payoff in a stochastic game is defined either as a Cesaro mean or an Abel mean of the stage payoffs. The contribution of this paper is twofold: first, it generalizes the result of \cite{BK76} to a more general class of payoff evaluations, and it proves with an example that this new result is tight. It also investigates the particular case of absorbing games. Second, for the uniform approach of Mertens and Neyman, this paper provides an example of absorbing game to demonstrate that there is no natural way to generalize their result to a wider class of payoff evaluations. 
\keywords{Stochastic games \and Weighted payoffs  \and Asymptotic value \and Shapley operator \and Uniform value.}
\end{abstract}
\section*{Notations}
\begin{itemize}
\item
The notation ``$X:=Y$" means ``$X$ is defined by the expression $Y$".
\item
If $A \subset B$, the complementary of $A$ in $B$ is denoted by $B \setminus{A}$.
\item
The set of non-negative integers is denoted by $\m{N}$, and $\m{N}^*:=\m{N} \setminus \left\{0\right\}$. 
\item
The set of real numbers is denoted by $\m{R}$, and the set of strictly positive real numbers is denoted by $\m{R}^*_+$.
\item
If $\left(C,\mathcal{C}\right)$ is a measurable space, we denote by $\Delta(C)$ the set of probability measures on $C$. We call $\delta_c$ the Dirac measure at $c \in C$. If 
$C_0 \subset C$ is a finite set and $(\alpha_c)_{c \in C_0} \in \Delta(C_0)$, then $\sum_{c \in C_0} \alpha_c \delta_c$  is denoted by $\sum_{c \in C_0} \alpha_c \cdot c$.
\end{itemize}
\section*{Introduction}
\label{intro}
Zero-sum stochastic games were introduced by \cite{SH53}. In this model, two players repeatedly play a zero-sum game, which depends on the state of nature. At each stage, a new state of nature is drawn from a distribution based on the actions of players and the state of the previous stage. The state of nature is announced to both players, along with the actions of the previous stage. Unless mentioned explicitly, we consider \textit{finite} stochastic games: the state space and the action sets are assumed to be finite.

There are several ways to evaluate the payoff in a stochastic game. For $n \in \m{N}^*$, the payoff in the $n-stage \ game$ is the Cesaro mean $\frac{1}{n} \sum_{m=1}^n g_m$, where $g_m$ is the payoff at stage $m \geq 1$. For $\lambda \in (0,1]$, the payoff in the $\lambda-discounted \ game$ is the Abel mean $\sum_{m \geq 1} \lambda(1-\lambda)^{m-1} g_m$.

Two main approaches are used to understand the properties of stochastic games with long duration :
\begin{itemize}
\item[-]
The asymptotic approach aims at determining if the value $v_n$ of the $n$-stage game and the value $v_{\lambda}$ of the $\lambda$-discounted game converge respectively when $n$ goes to infinity and $\lambda$ goes to 0. \cite{BK76} have proved that in finite stochastic games, $(v_n)$ and $(v_{\lambda})$ converge to the same limit. This result cannot be extended to stochastic games with compact action sets (see \cite{vigeral13}). Neither can it be extended to the case of public imperfect observation of the state of nature (see \cite{Z13}).
\item[-]
The uniform approach  analyzes the existence of strategies that are approximately optimal in any $n$-stage game and $\lambda$-discounted game, provided that $n$ is big enough and $\lambda$ is small enough. When this is the case, the stochastic game is said to have a \textit{uniform value}. \cite{MN81} have shown that finite stochastic games have a uniform value. Note that the existence of the uniform value implies the existence of the asymptotic value.
\end{itemize}
In this paper we investigate these two approaches, when payoffs are not restricted to be Cesaro means or Abel means of stage payoffs. As in \cite{CLS12}, if $\pi:=(\pi_m)_{m \geq 1} \in \Delta(\m{N}^*)$ is a sequence of weights, the payoff in the $\pi-weighted \ game$ is defined as the weighted sum $\sum_{m \geq 1} \pi_m g_m$. Intuitively, a $\pi$-weighted game with long duration corresponds to the case where the $(\pi_m)_{m \geq 1}$ are close to 0 (but still summing to one), but there are many different ways to define the convergence of $\pi$ to $0$. Once a criterion of convergence is defined, the asymptotic approach consists in determining whether or not the value $v_{\pi}$ of the $\pi$-weighted game converges when $\pi$ goes to 0. When this is the case, the game is said to have a \textit{general asymptotic value} (with respect to the chosen criterion).  Likewise, the uniform approach deals with the existence of strategies that are approximately optimal in any $\pi$-weighted game, with $\pi$ small enough. When this is the case, the game is said to have a \textit{general uniform value} (with respect to the chosen criterion).
Two main results can be found in literature:
\begin{itemize}
\item[-]
If $(\pi_m)_{m \geq 1} \in \Delta(\m{N}^*)$ is decreasing with respect to $m$, and if the criterion of convergence is: $\pi_1$ goes to 0, then finite stochastic games have a general uniform value (and thus a general asymptotic value). This result stems from the existence of the uniform value, established by \cite{MN81}, and from Theorem $1$ and Remark $(4)$ in \cite{NS10}.
\item[-]
 \cite{RV12} examine payoff weights that are not necessarily decreasing with respect to time, and consider the \textit{impatience} $I_1(\pi):=\sum_{m \geq 1} |\pi_{m+1}-\pi_m|$ of $\pi$ (see \cite[section 5.7]{S02b}).  They investigate the limit behavior of finite stochastic games with one Player (Markov Decision Processes) and finite POMDP (Markov Decision Processes with Partial Observation), when $I_1(\pi)$ goes to 0. In this framework, they show the existence of the general uniform value. Note that if $\pi$ is decreasing and $\pi_1$ goes to 0, then $I_1(\pi)=\pi_1$ goes to 0. Thus, for MDPs, this second result is more general than the first one.
\end{itemize}

In this paper, we also define a criterion on the convergence of $\pi$ to 0 under which the general asymptotic value exists in stochastic games. For the asymptotic approach, our theorem generalizes the two aforementioned results. In addition, we provide an example which shows first that our result is tight, and second that the result of \cite{RV12} cannot be extended to the Two-Player Case. We also show that for absorbing games with compact action sets and separately continuous transition and payoff functions, a sufficient condition under which $(v_{\pi})$ converges is that $\sup_{m \geq 1} \pi_m$ goes to 0 (when the action sets are finite, a sketch of proof for this last result is written in \cite{CLS12}). As for the uniform approach, we provide an example of absorbing game which shows that there is no natural way to relax the decreasing assumption on the weights.

The paper is organized as follows. Section 1 presents the model of stochastic games and some basic concepts. Section $2$ deals with the asymptotic approach, and Section $3$ presents the uniform approach.

\section{Generalities} \label{gen}
\subsection{Model of stochastic game} \label{model}
A stochastic game $\Gamma$ is defined by:
\begin{itemize}
\item[-]
A state space $K$,
\item[-]
An action set $I$ (resp. $J$) for Player 1 (resp. 2),
\item[-]
A payoff function $g:K \times I \times J \rightarrow [0,1]$,
\item[-]
A transition function $q:K \times I \times J \rightarrow \Delta(K)$.

\end{itemize}
Except in Subsection \ref{AG}, we assume that $K,I,J$ are (nonempty) finite sets.
\\
The initial state is $k_1 \in K$, and the stochastic game $\Gamma^{k_1}$ which starts in $k_1$ proceeds as follows. At each stage $m \geq 1$, both players choose simultaneously and independently an action, $i_m \in I$ (resp. $j_m \in J$) for Player 1 (resp. 2). The payoff at stage $m$ is $g_m:=g(k_m,i_m,j_m)$.
The state $k_{m+1}$ of stage $m+1$ is drawn from the probability distribution $q(k_m,i_m,j_m)$. Then $(k_{m+1},i_m,j_m)$ is publicly announced to both players.
\\

The set of all possible histories before stage $m$ is
$H_m:=(K \times I \times J)^{m-1} \times K$. A \textit{behavioral strategy} for Player 1 (resp. 2) is a mapping $\displaystyle \sigma:\cup_{m \geq 1} H_m \rightarrow \Delta(I)$ (resp. $\displaystyle \tau:\cup_{m \geq 1} H_m \rightarrow \Delta(J)$). The set of all behavioral strategies for Player 1 (resp. 2) is denoted by $\Sigma$ (resp. $\mathcal{T}$).
\\
A \textit{pure strategy} for Player 1 (resp. 2) is a mapping $\displaystyle \sigma:\cup_{m \geq 1} H_m \rightarrow I$ (resp. $\displaystyle \tau:\cup_{m \geq 1} H_m \rightarrow J$).
\\
A \textit{Markov strategy} is a strategy that depends only on the current stage and state. A Markov strategy for Player 1 (resp. 2) can be assimilated to a mapping from $\m{N}^* \times K$ to $\Delta(I)$ (resp. $\Delta(J)$).
\\
A \textit{stationary strategy} is a strategy that depends only on the current state. A stationary strategy for Player 1 (resp. 2) can be assimilated to a mapping from $K$ to $\Delta(I)$ (resp. $\Delta(J)$).
\\
The set of infinite plays of the game is $H_\infty:=(K \times I  \times J)^{\m{N}^*}$, and is equipped with the $\sigma$-algebra generated by cylinders.
A triple $(k_1,\sigma,\tau) \in K \times \Sigma \times \mathcal{T}$ induces a unique probability measure on $H_\infty$, denoted by $\mathbb{P}^{k_1}_{\sigma,\tau}$ (see \cite[Appendix D]{S02b}). Let $\pi \in \Delta(\m{N}^*)$ such that $\sum_{m \geq 1} \pi_m=1$. The $\pi-weighted \ game$ $\Gamma^{k_1}_\pi$ is the game defined by its normal form $(\Sigma,\mathcal{T},\gamma_{\pi}^{k_1})$, where 
\begin{equation*}
\gamma^{k_1}_{\pi}(\sigma,\tau):=\mathbb{E}^{k_1}_{\sigma,\tau}\left(\sum_{m \geq 1} \pi_m g_m \right).
\end{equation*}
By the minmax theorem (see \cite[Appendix A.5]{S02b}), the game $\Gamma^{k_1}_{\pi}$ has a value, denoted by $v_{\pi}(k_1)$:
\begin{equation*}
v_{\pi}(k_1)=\max_{\sigma \in \Sigma} \min_{\tau \in \mathcal{T}} \gamma^{k_1}_{\pi}(\sigma,\tau)
=\min_{\tau \in \mathcal{T}} \max_{\sigma \in \Sigma} \gamma^{k_1}_{\pi}(\sigma,\tau).
\end{equation*}
When for some $n \in \m{N}^*$, $\displaystyle \pi_m=n^{-1} 1_{m \leq n}$ for every $m \in \m{N}^*$, the game $\Gamma_n:=\Gamma_{\pi}$ is called the \textit{$n$-stage game}, and its payoff function is denoted by $\gamma_n$. When for some $\lambda \in (0,1]$, $\displaystyle \pi_m=\lambda(1-\lambda)^{m-1}$ for every $m \in \m{N}^*$, the game $\Gamma_{\lambda}:=\Gamma_{\pi}$ is called the \textit{$\lambda$-discounted game}, and its payoff function is denoted by $\gamma_{\lambda}$.

\subsection{Two results in the literature}
Let us fix a stochastic game $\Gamma$. Two standard definitions are recalled below:
\begin{definition}
The stochastic game $\Gamma$ has an \textit{asymptotic value} if the sequences $(v_{\lambda})$ and $(v_n)$ converge to the same limit, when respectively $\lambda$ goes to $0$ and $n$ goes to infinity.
\end{definition}
\begin{definition}
The stochastic game $\Gamma$ has a \textit{uniform value} $v_{\infty}:K \rightarrow [0,1]$ if for all $k_1 \in K$, for all $\epsilon>0$, there exists $(\sigma^*,\tau^*) \in \Sigma \times \mathcal{T}$ and $\bar{n} \in \m{N}^*$, such that for all $n \geq \bar{n}$ and $(\sigma,\tau) \in \Sigma \times \mathcal{T}$, we have
\begin{equation*}
\gamma^{k_1}_n(\sigma^*,\tau) \geq v_{\infty}(k_1)-\epsilon \quad \text{and} \quad
\gamma^{k_1}_n(\sigma,\tau^*) \leq v_{\infty}(k_1)+\epsilon.
\end{equation*}
\end{definition}
\cite{BK76} have proved that $\Gamma$ has an asymptotic value, and \cite{MN81} have generalized this result in the following way:
\begin{theorem} \label{MN}
The stochastic game $\Gamma$ has a \textit{uniform value} $v_{\infty}$.
In particular, $\Gamma$ has an \textit{asymptotic value}, and $(v_{\lambda})$ and $(v_n)$ converge to $v_{\infty}$, when respectively $\lambda$ goes to $0$ and $n$ goes to infinity. 
\end{theorem}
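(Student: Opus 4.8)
The plan is to follow the now-classical two-part strategy: first extract from the algebraic structure of the Shapley operator enough regularity of $\lambda \mapsto v_\lambda$ to guarantee both convergence and a bounded-variation estimate, and then use that estimate to build adaptive strategies that are $\epsilon$-optimal in every sufficiently long game simultaneously.

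\emph{Step 1 (algebraic regularity, after Bewley--Kohlberg).} I would start from the Shapley equation $v_\lambda(k) = \val_{(x,y) \in \Delta(I)\times\Delta(J)}\{\lambda g(k,x,y) + (1-\lambda)\sum_{k'} q(k'\mid k,x,y)\, v_\lambda(k')\}$ characterizing the discounted value. Because $K,I,J$ are finite, for each fixed $\lambda$ this is a finite system whose solution is definable in the ordered field of reals, so each coordinate $\lambda \mapsto v_\lambda(k)$ is a semialgebraic function on $(0,1]$. A one-variable semialgebraic function admits a Puiseux expansion near $0$: there are $\lambda_0>0$ and an integer $M\geq 1$ with $v_\lambda(k)=\sum_{i\geq 0} a_i(k)\lambda^{i/M}$ for $\lambda\in(0,\lambda_0)$. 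Hence $v_0 := \lim_{\lambda\to 0} v_\lambda$ exists, and termwise differentiation gives $\int_0^1 \|\tfrac{d}{d\lambda}v_\lambda\|_\infty\, d\lambda < \infty$, i.e. $\lambda\mapsto v_\lambda$ has bounded variation on $(0,1]$.

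\emph{Step 2 (adaptive strategy).} Fix $k_1$ and $\epsilon>0$; I claim $v_\infty := v_0$ works. Let $x_\lambda$ be an optimal stationary strategy of Player 1 in $\Gamma_\lambda$, so the Shapley equation yields the one-shot guarantee $\lambda g(k,x_\lambda,y) + (1-\lambda)\,\mathbb{E}[v_\lambda(k')] \geq v_\lambda(k)$ for every state $k$ and every action $y$ of Player 2. The idea is to let the discount factor drift along the play: define a real index $z_m$, set $\lambda_m := \lambda(z_m)$ for a fixed decreasing calibration $\lambda(\cdot)$, play $x_{\lambda_m}$ at stage $m$, and update $z_{m+1}$ from $z_m$ using the realized payoff $g_m$ relative to $v_{\lambda_m}(k_m)$, so that favorable stages push the index toward smaller discount factors. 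One then checks that a suitable process built from $v_{\lambda_m}(k_m)$ and the accumulated payoffs is, up to correction terms coming from replacing $\lambda_m$ by $\lambda_{m+1}$, a submartingale under $\mathbb{P}^{k_1}_{\sigma^*,\tau}$ for every $\tau$. The crucial point is that the total size of these corrections is dominated by the total variation of $\lambda\mapsto v_\lambda$ from Step 1, which the calibration makes smaller than $\epsilon$. Telescoping then gives $\gamma_n^{k_1}(\sigma^*,\tau)\geq v_0(k_1)-\epsilon$ for all $\tau$ and all $n\geq\bar n$, and the symmetric construction for Player 2 (using $y_\lambda$ and the reverse inequality) gives $\gamma_n^{k_1}(\sigma,\tau^*)\leq v_0(k_1)+\epsilon$.

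\emph{Conclusion and main obstacle.} These two one-sided bounds are exactly the definition of the uniform value, so $v_\infty=v_0$; the convergence $v_n\to v_\infty$ is then immediate from the uniform bounds, while $v_\lambda\to v_0=v_\infty$ was already obtained in Step 1. The algebraic Step 1 is essentially bookkeeping once semialgebraicity is invoked. I expect the genuine difficulty to lie in Step 2: choosing the calibration $\lambda(\cdot)$ and the update rule for $z_m$ so that the martingale comparison closes, and in particular verifying that the accumulated discretization error from moving the discount factor stage by stage is controlled by the bounded-variation estimate rather than growing linearly in the horizon. This quantitative balancing is the technical core of the Mertens--Neyman argument and is where all the care is required.
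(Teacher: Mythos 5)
Your proposal is correct and follows exactly the route the paper relies on: Theorem \ref{MN} is not proved in the paper but is quoted from the literature, namely the Bewley--Kohlberg semialgebraic/Puiseux argument for the regularity of $\lambda \mapsto v_\lambda$ combined with the Mertens--Neyman adaptive discount-factor strategy, which is precisely your Step 1 and Step 2. Your identification of the technical core (calibrating the drifting discount factor so that the discretization errors are absorbed by the bounded-variation estimate) matches where the difficulty genuinely lies in Mertens--Neyman's proof.
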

This theorem shows that both players have strategies that are approximately optimal in any long game $\Gamma^{k_1}_n$. The existence of a stronger notion of uniform value is then straightforward (see Theorem $1$ and Remark $(4)$ in \cite{NS10}): players have strategies that are approximately optimal in any game $\Gamma^{k_1}_{\pi}$ with $\pi=(\pi_m)_{m \geq 1}$ decreasing with respect to $m$ and $\pi_1$ sufficiently small. For completeness, we give a sketch of the proof of this corollary.
\begin{corollary} \label{MNd}
For all $k_1 \in K$, for all $\epsilon>0$, there exists $(\sigma^*,\tau^*) \in \Sigma \times \mathcal{T}$ and $\alpha>0$ such that for all $\pi=(\pi_m)_{m \geq 1} \in \Delta(\m{N}^*)$ decreasing with respect to $m$, and that satisfies $\displaystyle I_{\infty}(\pi):=\sup_{m \geq 1} \pi_m=\pi_1 \leq \alpha$, we have for all $(\sigma,\tau) \in \Sigma \times \mathcal{T}$
\begin{equation*}
\gamma^{k_1}_{\pi}(\sigma^*,\tau) \geq v_{\infty}(k_1)-\epsilon \quad \text{and} \quad
\gamma^{k_1}_{\pi}(\sigma,\tau^*) \leq v_{\infty}(k_1)+\epsilon.
\end{equation*}
In particular, 
$(v_{\pi})$ converges to $v_{\infty}$ when $\pi$ is decreasing and $I_{\infty}(\pi)$ goes to $0$. 
\end{corollary}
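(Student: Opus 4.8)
The plan is to exploit the elementary fact that a \emph{decreasing} probability weight is a convex combination of the Cesaro weights, and then to transfer the uniform value inequalities of Theorem~\ref{MN} through this decomposition. Concretely, write $u^n=(u^n_m)_{m\geq 1}$ for the uniform weight $u^n_m:=n^{-1}1_{m\leq n}$ (so that $\gamma^{k_1}_{u^n}=\gamma^{k_1}_n$). Given a decreasing $\pi=(\pi_m)_{m\geq 1}\in\Delta(\m{N}^*)$, I would set $\mu_n:=n(\pi_n-\pi_{n+1})\geq 0$ for $n\geq 1$. Since $\pi$ is decreasing and summable one has $n\pi_n\to 0$, and a telescoping computation then gives both $\sum_{n\geq 1}\mu_n=1$ and, for every $m$,
\[
\sum_{n\geq 1}\mu_n\, u^n_m=\sum_{n\geq m}\frac{\mu_n}{n}=\sum_{n\geq m}(\pi_n-\pi_{n+1})=\pi_m .
\]
Hence $(\mu_n)_{n\geq 1}\in\Delta(\m{N}^*)$ and $\pi=\sum_{n\geq 1}\mu_n u^n$. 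By linearity of the map $\pi\mapsto\gamma^{k_1}_\pi(\sigma,\tau)$ (interchanging the two nonnegative summations, justified by Tonelli), this yields for every pair of strategies
\[
\gamma^{k_1}_\pi(\sigma,\tau)=\sum_{n\geq 1}\mu_n\,\gamma^{k_1}_n(\sigma,\tau).
\]

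Next, fix $k_1\in K$ and $\epsilon>0$, and let $(\sigma^*,\tau^*)\in\Sigma\times\mathcal{T}$ and $\bar n\in\m{N}^*$ be given by the uniform value of Theorem~\ref{MN} for the parameter $\epsilon$. The crux is to show that when $\pi_1=I_\infty(\pi)$ is small, the weight $(\mu_n)$ puts almost all its mass on indices $n\geq\bar n$, where the uniform value inequalities are available. Using the partial-sum identity $\sum_{n=1}^{N}n(\pi_n-\pi_{n+1})=\sum_{k=1}^{N+1}\pi_k-(N+1)\pi_{N+1}$ with $N=\bar n-1$, together with $\pi_k\leq\pi_1$ and $\bar n\,\pi_{\bar n}\geq 0$, I obtain $\sum_{n<\bar n}\mu_n\leq\sum_{k=1}^{\bar n}\pi_k\leq\bar n\,\pi_1$. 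Choosing $\alpha:=\epsilon/\bar n$ therefore guarantees, as soon as $\pi_1\leq\alpha$, that $\sum_{n<\bar n}\mu_n\leq\epsilon$ and consequently $\sum_{n\geq\bar n}\mu_n\geq 1-\epsilon$.

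Finally, I would split $\gamma^{k_1}_\pi(\sigma^*,\tau)=\sum_{n<\bar n}\mu_n\gamma^{k_1}_n(\sigma^*,\tau)+\sum_{n\geq\bar n}\mu_n\gamma^{k_1}_n(\sigma^*,\tau)$. Bounding the first sum below by $0$ (payoffs lie in $[0,1]$ and $\mu_n\geq 0$) and using $\gamma^{k_1}_n(\sigma^*,\tau)\geq v_\infty(k_1)-\epsilon$ for $n\geq\bar n$ in the second gives $\gamma^{k_1}_\pi(\sigma^*,\tau)\geq(v_\infty(k_1)-\epsilon)\sum_{n\geq\bar n}\mu_n\geq v_\infty(k_1)-2\epsilon$, the last step being an elementary estimate using $0\leq v_\infty(k_1)\leq 1$ and $1-\epsilon\leq\sum_{n\geq\bar n}\mu_n\leq 1$. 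Symmetrically, bounding the first sum above by $\sum_{n<\bar n}\mu_n\leq\epsilon$ and using $\gamma^{k_1}_n(\sigma,\tau^*)\leq v_\infty(k_1)+\epsilon$ for $n\geq\bar n$ gives $\gamma^{k_1}_\pi(\sigma,\tau^*)\leq v_\infty(k_1)+2\epsilon$. Running the argument with $\epsilon/2$ in place of $\epsilon$ produces exactly the two claimed inequalities; the ``in particular'' statement then follows from the minmax characterization $v_\pi(k_1)=\max_\sigma\min_\tau\gamma^{k_1}_\pi(\sigma,\tau)=\min_\tau\max_\sigma\gamma^{k_1}_\pi(\sigma,\tau)$, which forces $|v_\pi(k_1)-v_\infty(k_1)|\leq\epsilon$ whenever $\pi$ is decreasing with $I_\infty(\pi)\leq\alpha$, hence the convergence. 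The only genuinely substantive points are the decomposition $\pi=\sum_n\mu_n u^n$ and the mass estimate $\sum_{n<\bar n}\mu_n\leq\bar n\,\pi_1$; the remainder is bookkeeping, the background fact being $n\pi_n\to 0$ for decreasing summable sequences.
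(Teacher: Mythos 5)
Your proposal is correct and takes essentially the same route as the paper: the paper's own (sketched) proof rests precisely on the convex decomposition $\sum_{m \geq 1} \pi_m g_m=\sum_{m \geq 1} m(\pi_m-\pi_{m+1}) \frac{1}{m} \sum_{l=1}^m g_l$, i.e.\ your identity $\pi=\sum_{n\geq 1} n(\pi_n-\pi_{n+1})\, u^n$, combined with the uniform value inequalities of Theorem~\ref{MN}. Your write-up merely fills in the bookkeeping the paper leaves implicit (the fact that $(\mu_n)$ is a probability weight, the mass estimate $\sum_{n<\bar n}\mu_n\leq \bar n\,\pi_1$, and the choice $\alpha=\epsilon/\bar n$), all of which is sound.
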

\begin{proof}{(Sketch)}
If $\pi \in \Delta(\m{N}^*)$ is decreasing, then the $\pi$-weighted payoff is a convex combination of Cesaro-mean payoffs:
\begin{equation*}
\sum_{m \geq 1} \pi_m g_m=\sum_{m \geq 1} m(\pi_m-\pi_{m+1}) \frac{1}{m} \sum_{l=1}^m g_l.
\end{equation*}
The proof of the corollary follows from this equality.
$\squareforqed$
\end{proof}
 In the One-Player Case, by a particular case of Theorem 3.19 in \cite{RV12}, this result can be extended to a wider class of weights, in the following way:
\begin{theorem} \label{RV}
Assume that $\Gamma$ is a Markov decision process, that is, the functions $q$ and $g$ do not depend on the action of Player $2$.
Then 
$\Gamma$ has a \textit{general uniform value}: 
for all $k_1 \in K$, for all $\epsilon>0$,  there exists $\sigma^* \in \Sigma$ and $\alpha>0$, such that for all $\pi \in \Delta(\m{N}^*)$ that satisfies
\\
$\displaystyle I_1(\pi):=\sum_{m \geq 1}|\pi_{m+1}-\pi_m| \leq \alpha$, we have
\begin{equation*}
v_{\infty}(k_1)-\epsilon \leq \gamma^{k_1}_{\pi}(\sigma^*) \leq v_{\infty}(k_1)+\epsilon .
\end{equation*}
In particular, 
$(v_{\pi})$ converges to $v_{\infty}$ when $I_1(\pi)$ goes to $0$.
\end{theorem}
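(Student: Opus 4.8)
The plan is to use that a Markov decision process is a one-player problem, so that, there being no second player, $\gamma^{k_1}_{\pi}(\sigma^*)$ is simply the expected $\pi$-weighted reward under the controller's strategy $\sigma^*$, and to couple the existence of the uniform value (Theorem~\ref{MN}) with a block decomposition whose error is controlled by $I_1(\pi)$. Recall that in a finite MDP the Cesaro values converge, $v_n\to v_\infty$, to the optimal long-run average reward, and that $v_\infty$ satisfies the average-reward optimality equation $v_\infty(k)=\max_{i\in I}\sum_{k'}q(k'\,|\,k,i)\,v_\infty(k')$. The first thing I would record is its consequence that $\sum_{k'}q(k'\,|\,k,i)\,v_\infty(k')\le v_\infty(k)$ for \emph{every} action $i$, i.e. that $v_\infty$ is superharmonic for the transition. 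Hence under any strategy the process $(v_\infty(k_m))_{m\ge1}$ is a supermartingale, while under a stationary optimal policy $f^*$ (selecting maximizing actions) it is a martingale; in particular $\mathbb{E}^{k_1}_{\sigma^*}[v_\infty(k_m)]\le v_\infty(k_1)$ for the strategy we will use, with equality when $\sigma^*=f^*$. This is the one-player input that replaces the convex-combination representation of Corollary~\ref{MNd}: for decreasing $\pi$ genuine Abel summation expresses the payoff as a convex combination of Cesaro means, but for non-monotone $\pi$ the analogous signed coefficients have $\ell^1$-norm far larger than $I_1(\pi)$, so a different device is needed.

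Fix $\epsilon>0$. Since $K$ is finite I would choose a single horizon $\bar n$ and a stationary $\epsilon$-optimal policy $f^*=:\sigma^*$ that is simultaneously good from every state: $\gamma^{k}_{\bar n}(f^*)\ge v_\infty(k)-\epsilon$ for all $k\in K$, and dually $\gamma^{k}_{n}(\sigma)\le v_n(k)\le v_\infty(k)+\epsilon$ for every $\sigma$ and $n\ge\bar n$. Then cut $\m{N}^*$ into consecutive blocks $B_t$ of fixed length $\bar n$ and, writing $\pi^{(t)}:=\pi_{\min B_t}$ and $h_m:=\mathbb{E}^{k_1}_{\sigma^*}[g_m]$, decompose
\begin{equation*}
\sum_{m\ge1}\pi_m h_m=\sum_{t\ge1}\pi^{(t)}\sum_{m\in B_t}h_m+\sum_{t\ge1}\sum_{m\in B_t}(\pi_m-\pi^{(t)})h_m .
\end{equation*}
Since $|h_m|\le1$ and $|\pi_m-\pi^{(t)}|$ is bounded by the variation of $\pi$ on $B_t$, the second sum is at most $\bar n\,I_1(\pi)$ in absolute value, and likewise $|\bar n\sum_t\pi^{(t)}-1|\le\bar n\,I_1(\pi)$. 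The crucial point is that $\bar n$ is fixed independently of $\pi$, so the total error is \emph{linear} in $I_1(\pi)$, with no factor growing in $m$.

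For the lower bound I would estimate each block sum by conditioning at $\min B_t$: as $\sigma^*$ is stationary, $\mathbb{E}^{k_1}_{\sigma^*}\bigl[\sum_{m\in B_t}g_m\bigr]\ge\bar n\bigl(\mathbb{E}^{k_1}_{\sigma^*}[v_\infty(k_{\min B_t})]-\epsilon\bigr)=\bar n\bigl(v_\infty(k_1)-\epsilon\bigr)$, using the martingale property and the windowed optimality of $f^*$; multiplying by $\pi^{(t)}\ge0$ and summing gives $\gamma^{k_1}_{\pi}(\sigma^*)\ge v_\infty(k_1)-\epsilon-C\bar n\,I_1(\pi)$. For the upper bound the same decomposition, now invoking the supermartingale inequality, yields $\mathbb{E}[\sum_{m\in B_t}g_m\mid\mathcal F_{\min B_t}]\le\bar n\bigl(v_\infty(k_{\min B_t})+\epsilon\bigr)$ and hence $\gamma^{k_1}_{\pi}(\sigma^*)\le v_\infty(k_1)+\epsilon+C\bar n\,I_1(\pi)$. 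Taking $\alpha:=\epsilon/(C\bar n)$ makes both errors at most $\epsilon$, which after relabelling gives the statement, and the convergence $v_\pi\to v_\infty$ as $I_1(\pi)\to0$ follows at once.

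I expect the genuine obstacle to be the control of $v_\infty(k_m)$ \emph{along the trajectory}: the block estimate only produces guarantees relative to $v_\infty$ of the \emph{current} state, and converting these into guarantees relative to the fixed scalar $v_\infty(k_1)$ is exactly what the (super/sub)martingale property supplies. That property rests on the superharmonicity of $v_\infty$, a one-player phenomenon flowing from the optimality equation and having no counterpart in genuine two-player games — consistent with the companion example of this paper showing Theorem~\ref{RV} does not extend to the two-player case. A secondary, purely technical point is the convergence of the series and the vanishing of boundary terms in the decomposition, which is ensured by $\sum_m\pi_m=1$ together with the elementary bound $\sup_m\pi_m\le I_1(\pi)$.
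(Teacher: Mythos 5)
A point of reference first: the paper does not prove this theorem at all --- it is quoted as a particular case of Theorem 3.19 of Renault and Venel (2012), whose proof runs through much heavier machinery (general POMDPs, value functions on the space of beliefs). So your argument is not a reconstruction of the paper's proof but an independent, elementary one tailored to finite MDPs, which is all the statement as formulated requires. Its architecture is sound. The blocks of fixed length $\bar n$ partition $\m{N}^*$, so the within-block variation bounds $\sum_t\sum_{m\in B_t}|\pi_m-\pi^{(t)}|\le \bar n\, I_1(\pi)$ and $|\bar n\sum_t\pi^{(t)}-1|\le \bar n\, I_1(\pi)$ are correct, and since $\bar n$ depends on $\epsilon$ only, taking $\alpha$ of order $\epsilon/\bar n$ does give a single pair $(\sigma^*,\alpha)$ valid for all admissible $\pi$ --- the uniformity the theorem demands. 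The superharmonicity of $v_\infty$ (first average-reward optimality equation) indeed makes $(v_\infty(k_m))_{m\ge 1}$ a supermartingale under \emph{every} strategy, which is exactly what converts blockwise estimates relative to $v_\infty(k_{\min B_t})$ into estimates relative to the scalar $v_\infty(k_1)$; and because the upper half of your block estimate ($\mathbb{E}\bigl[\sum_{m\in B_t}g_m\mid\mathcal{F}_{\min B_t}\bigr]\le\bar n\bigl(v_\infty(k_{\min B_t})+\epsilon\bigr)$, via $v_{\bar n}\le v_\infty+\epsilon$) holds for an \emph{arbitrary} strategy, it bounds $v_\pi$ itself from above, so the ``in particular'' clause genuinely follows. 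You also correctly identified why the convex-combination trick of Corollary \ref{MNd} is unavailable and why the martingale control along the trajectory is the crux --- one can check that the naive alternative (restarting an $\bar n$-stage optimal strategy on each block) fails precisely because such a strategy may end each block in a state of low $v_\infty$.

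One imprecision does need fixing: your parenthetical description of $f^*$ as ``selecting maximizing actions'' in the equation $v_\infty(k)=\max_{i}\sum_{k'}q(k,i)(k')\,v_\infty(k')$ does not produce the object your proof uses. In multichain MDPs a selector of this first optimality equation need not be average-optimal: take two states $A$ and $B$, with $B$ absorbing with payoff $1$, and in $A$ two actions of payoff $0$, one staying in $A$, one moving to $B$. Both actions maximize the expected continuation value, yet the policy that stays in $A$ forever satisfies your martingale property while its $n$-stage payoffs are identically $0$, so the windowed optimality $\gamma^{k}_{\bar n}(f^*)\ge v_\infty(k)-\epsilon$ fails badly. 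What you need is a stationary policy that is genuinely gain-optimal (for instance Blackwell-optimal), whose existence in finite MDPs is classical; for such an $f^*$ one has $g_{f^*}=v_\infty$, hence both $P_{f^*}v_\infty=P_{f^*}g_{f^*}=g_{f^*}=v_\infty$ (your martingale property) and $\gamma^{k}_{n}(f^*)\to v_\infty(k)$ for each of the finitely many states $k$ (your windowed optimality for $\bar n$ large). With that substitution your proof is correct, and it is considerably more elementary than the Renault--Venel route --- the price being that it does not extend beyond finite state spaces with perfectly observed states, where superharmonicity of $v_\infty$ is precisely the one-player feature that, as you note, has no two-player counterpart.
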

In the next section, we study the asymptotic approach and investigate whether we can also relax the decreasing assumption in Corollary \ref{MNd} in the Two-Player Case. 
\section{Asymptotic approach}
\subsection{A criterion for the convergence of $(v_{\pi})$}
The first obvious point is that if one removes the decreasing assumption in Corollary \ref{MNd} and only assumes that $I_{\infty}(\pi):=\sup_{m \geq 1} \pi_m$ goes to zero, $(v_{\pi})$ does not necessarily converge. Indeed consider the Markov chain which oscillates deterministically between two states, one with payoff $1$, the other one with payoff $0$. Consider two sequences of weights, one which puts weight on even stages and one which puts weight on odd stages. The difference between these two payoff evaluations is always equal to $1$. 
Thus the condition $I_{\infty}(\pi) \rightarrow 0$ is not a sufficient condition to obtain the convergence of $(v_{\pi})$. Let us now provide a more restrictive criterion under which $(v_{\pi})$ converges.
\begin{definition}
Let $\pi \in \Delta(\m{N}^*)$ and $p \in (0,+\infty]$. The $\textit{p-impatience}$ of $\pi$ is the quantity $I_p(\pi) \in (0,+\infty]$ defined by

\begin{equation*}
I_p(\pi):= \left\{
\begin{array}{ll}
\displaystyle   \sum_{m \geq 1} \left|{(\pi_{m+1})^p}-{(\pi_m)^p} \right| & \mbox{if} \ \ p<\infty, \\
\displaystyle \sup_{m \geq 1} \pi_m & \mbox{if} \ \ p=\infty.
\end{array}
\right.
\end{equation*}
\end{definition}
When $I_p(\pi)$ is small, it means that players are very patient. When in addition $p<\infty$, it means that the variations of $\pi$ with respect to $m$ are small.

\begin{proposition} \label{imp}
Let $\pi \in \Delta(\m{N}^*)$ and $p,p' \in \m{R}^*_{+}$, such that $p \leq p'$. Then 
\begin{itemize}
\item[-]
$I_{p'}(\pi) \leq (p'/p) I_p(\pi)$,
\item[-]
$I_{\infty}(\pi) \leq  (I_p(\pi))^{1/p}$.
\end{itemize}
\end{proposition}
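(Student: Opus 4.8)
The plan is to prove both inequalities by reducing them to elementary properties of the power maps $t \mapsto t^q$, exploiting two features that hold for \emph{every} $\pi \in \Delta(\m{N}^*)$. First, since $\sum_{m \geq 1} \pi_m = 1$ with nonnegative terms, each $\pi_m \in [0,1]$. Second, the convergence of this series forces $\pi_m \to 0$, and hence $(\pi_m)^p \to 0$, as $m \to \infty$. These are the only two places where the structure of $\Delta(\m{N}^*)$ (as opposed to an arbitrary nonnegative sequence) is actually used, and I would isolate them at the outset.

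For the first inequality I would argue term by term. Fix $m$, set $a := \pi_m$ and $b := \pi_{m+1}$, and assume without loss of generality $a \leq b$. Using the representation $b^q - a^q = q \int_a^b t^{q-1} \diff t$ valid for any exponent $q > 0$, I get
\begin{equation*}
|b^{p'} - a^{p'}| = p' \int_a^b t^{p'-1} \diff t \quad \text{and} \quad |b^p - a^p| = p \int_a^b t^{p-1} \diff t.
\end{equation*}
Because $a,b \in [0,1]$ and $p' \geq p$, every $t \in [a,b]$ satisfies $t^{p'-1} = t^{p-1}\, t^{p'-p} \leq t^{p-1}$, since $t^{p'-p} \leq 1$ on $[0,1]$. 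Therefore $\int_a^b t^{p'-1}\diff t \leq \int_a^b t^{p-1}\diff t$, which gives $|b^{p'}-a^{p'}| \leq (p'/p)\,|b^p-a^p|$. Summing over $m \geq 1$ yields $I_{p'}(\pi) \leq (p'/p)\, I_p(\pi)$.

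For the second inequality I would use a telescoping argument anchored at infinity. Fix $m_0 \geq 1$; for any $N > m_0$ one has $(\pi_{m_0})^p - (\pi_N)^p = \sum_{m=m_0}^{N-1} \left[(\pi_m)^p - (\pi_{m+1})^p\right]$. Letting $N \to \infty$ and using $(\pi_N)^p \to 0$, I obtain
\begin{equation*}
(\pi_{m_0})^p = \sum_{m \geq m_0} \left[(\pi_m)^p - (\pi_{m+1})^p\right] \leq \sum_{m \geq m_0} \left|(\pi_{m+1})^p - (\pi_m)^p\right| \leq I_p(\pi).
\end{equation*}
Since this holds for every $m_0$, taking the supremum gives $(\sup_m \pi_m)^p = \sup_m (\pi_m)^p \leq I_p(\pi)$, and raising to the power $1/p$ yields $I_{\infty}(\pi) \leq (I_p(\pi))^{1/p}$.

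The computations are routine; the main (if modest) obstacle is recognizing and justifying the two structural facts about $\pi \in \Delta(\m{N}^*)$. The confinement $\pi_m \in [0,1]$ is exactly what makes the pointwise bound $t^{p'-p} \leq 1$ available in the first part, and the vanishing $\pi_m \to 0$ is what lets the telescoping sum in the second part be evaluated at its limit. Dropping either assumption would break the respective inequality, so these observations carry the proof.
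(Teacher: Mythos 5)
Your proof is correct and takes essentially the same route as the paper's: the first inequality is obtained by the same term-by-term bound $\left|(\pi_{m+1})^{p'}-(\pi_m)^{p'}\right| \leq (p'/p)\left|(\pi_{m+1})^{p}-(\pi_m)^{p}\right|$ (the paper gets it from the Mean Value Theorem applied to $t \mapsto t^{p'/p}$ on $[0,1]$, you from the integral representation of $t^{p'}$ and $t^{p}$ --- both hinge on the confinement $\pi_m \in [0,1]$), and the second inequality is the identical telescoping argument. If anything, you are more explicit than the paper about the two structural facts being used, namely $\pi_m \in [0,1]$ and $\pi_m \to 0$, which the paper invokes implicitly.
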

\begin{proof}
Let $m \in \m{N}^*$ and $q:=p'/p$. The Mean Value Theorem implies that
\begin{equation*}
\left|(\pi_{m+1})^{p'}-(\pi_m)^{p'} \right|=\left|\left[(\pi_{m+1})^{p}\right]^q-\left[(\pi_m)^{p}\right]^q \right| \leq q  \left|(\pi_{m+1})^{p}-(\pi_m)^{p} \right|,
\end{equation*}
and it yields: $I_{p'}(\pi) \leq q I_p(\pi)$. As for the second inequality, we have
\begin{equation*}
(\pi_m)^p=\sum_{m' \geq m} \left[(\pi_{m'})^p-(\pi_{m'+1})^p \right] \leq I_p(\pi),
\end{equation*}
and it yields: $I_{\infty}(\pi) \leq (I_p(\pi))^{1/p}$.
$\squareforqed$
\end{proof}
\begin{remark}
When $(\pi_m)_{m \geq 1}$ is decreasing, for all $p \in \m{R}_+^*$, we have $I_p(\pi)=(\pi_1)^p$. Consequently, given $p,p' \in \m{R}_+^*$ such that $p \leq p'$, there does not exist a real number $C(p,p')>0$ such that for all $\pi \in \Delta(\m{N}^*)$, $I_{p'}(\pi) \geq C(p,p') I_p(\pi)$. 
\end{remark}
Let us fix a stochastic game $\Gamma$, and let $v_{\infty}=\lim_{\lambda \rightarrow 0} v_{\lambda}=\lim_{n \rightarrow+\infty} v_n$ be its uniform value. For $f$ a real-valued function, denote by $\left\|f \right\|_{\infty}$ the supremum of $f$.
\begin{definition}
Let $p \in (0,+\infty]$. 
The stochastic game $\Gamma$ has a \textit{$p$-asymptotic value} if for all $\epsilon>0$, there exists $\alpha>0$ such that for all $\pi \in \Delta(\m{N}^*)$ verifying $I_p(\pi) \leq \alpha$, we have $\left\|v_{\pi}-v_{\infty}\right\|_{\infty} \leq \epsilon$. 
\end{definition}
\begin{remark} \mbox{}
\begin{itemize}
\item[-]
If for some $p' \in (0,+\infty]$, the game $\Gamma$ has a $p'$-asymptotic value, it has a $p$-asymptotic value for all $p \leq p'$. It results directly from Proposition \ref{imp}.
\item[-]
By Theorem \ref{RV}, any Markov decision process has a $1$-asymptotic value. 
\item[-]
Finite absorbing games have an $\infty$-asymptotic value (see \cite{CLS12}).
\item[-]
The Markov chain described at the beginning of this subsection has no $p$-asymptotic value for all $p>1$.
\end{itemize}
\end{remark}

Recall that $(v_{\lambda})$ can be expanded in Puiseux series (see \cite{BK76}): there exists $\beta>0$, $M \in \m{N}^*$ and $r_m \in \m{R}^{K}$ such that for all $k \in K$ and $\lambda \in [0,\beta)$
\begin{equation} \label{puiseux}
v_{\lambda}(k)=\sum_{m \geq 0} r_m(k) \lambda^{\frac{m}{M}} ,
\end{equation}
with the convention $v_0:=v_{\infty}$. 
\begin{definition}
Let $m_0=\inf\left\{m \geq 1 \ | \ r_m \neq 0 \right\}$. 
The quantity $s:=m_0/M \in[0,+\infty]$ is called the \textit{order} of $\Gamma$. 
\end{definition}
Note that if $s<+\infty$, there exists $C>0$ such that for all $(\lambda,\lambda') \in [0,\beta)^2$, we have
\begin{equation} \label{plip}
\left\|v_{\lambda}-v_{\lambda'}\right\|_\infty \leq C \left|\lambda^{s}-\lambda'^{s} \right| .
\end{equation} 

If $A$ is a finite set, the cardinal of $A$ is denoted by $\card A$. 
By Remark 3 in \cite{B14}, we have
\begin{equation*}
s \geq (\card K \card I)^{-\sqrt{\card K \card I}}.
\end{equation*}
Now we can state our main theorem. 
\begin{theorem} \label{ex}
The stochastic game $\Gamma$ has a $s$-asymptotic value. In particular, if $p \in \m{R}_{+}^*$ is smaller or equal to $(\card K \card I)^{-\sqrt{\card K \card I}}$, then $\Gamma$ has a $p$-asymptotic value.
\end{theorem}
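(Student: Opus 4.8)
The plan is to exploit the dynamic-programming (Shapley-operator) structure of the game together with the Lipschitz estimate \eqref{plip}. For $f:K\to\m{R}$ and $\lambda\in(0,1]$, I would use the Shapley operator
\begin{equation*}
\Psi(\lambda,f)(k):=\mathrm{val}_{(x,y)\in\Delta(I)\times\Delta(J)}\left[\lambda\, g(k,x,y)+(1-\lambda)\sum_{k'\in K} q(k,x,y)(k')\, f(k')\right],
\end{equation*}
where $g,q$ are extended multilinearly in $(x,y)$. Recall that $v_\lambda$ is its unique fixed point and that $f\mapsto\Psi(\lambda,f)$ is $(1-\lambda)$-Lipschitz for $\|\cdot\|_\infty$. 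Writing $\Pi_m:=\sum_{l\geq m}\pi_l$ for the tail mass and $\lambda_m:=\pi_m/\Pi_m$ for the relative weight of stage $m$, a standard backward recursion on the (existing, by the minmax theorem) values of the truncated games shows that the normalized continuation values $u_m$, i.e.\ the values of the game started at stage $m$ with payoff $\Pi_m^{-1}\sum_{l\geq m}\pi_l g_l$, satisfy $u_m=\Psi(\lambda_m,u_{m+1})$, with $v_\pi=u_1$ and $\lambda_1=\pi_1$.

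First I would compare each $u_m$ with the stationary reference $v_{\lambda_m}=\Psi(\lambda_m,v_{\lambda_m})$. The contraction property and the triangle inequality give $\|u_m-v_{\lambda_m}\|_\infty\leq(1-\lambda_m)\bigl(\|u_{m+1}-v_{\lambda_{m+1}}\|_\infty+\|v_{\lambda_{m+1}}-v_{\lambda_m}\|_\infty\bigr)$. Unrolling this and using the telescoping identity $\prod_{l=1}^m(1-\lambda_l)=\prod_{l=1}^m\Pi_{l+1}/\Pi_l=\Pi_{m+1}$ (with $\Pi_{m+1}\to0$, which kills the remainder since the values are bounded) yields $\|v_\pi-v_{\pi_1}\|_\infty\leq\sum_{m\geq1}\Pi_{m+1}\|v_{\lambda_{m+1}}-v_{\lambda_m}\|_\infty$. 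Bounding each increment by \eqref{plip} I obtain
\begin{equation*}
\|v_\pi-v_{\pi_1}\|_\infty\leq C\sum_{m\geq1}\Pi_{m+1}\left|\left(\tfrac{\pi_{m+1}}{\Pi_{m+1}}\right)^{s}-\left(\tfrac{\pi_m}{\Pi_m}\right)^{s}\right|.
\end{equation*}
The reference term is harmless: \eqref{plip} with $\lambda'=0$ gives $\|v_{\pi_1}-v_\infty\|_\infty\leq C\pi_1^{s}$, and since $\pi_1\leq I_\infty(\pi)$, Proposition \ref{imp} yields $\pi_1^{s}\leq I_\infty(\pi)^{s}\leq I_s(\pi)$. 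By the triangle inequality, everything thus reduces to controlling the displayed sum by $I_s(\pi)=\sum_{m\geq1}|\pi_{m+1}^{s}-\pi_m^{s}|$.

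The hard part is exactly this last estimate: showing that the renormalization $\pi_m\mapsto\pi_m/\Pi_m$ does not inflate the variation, i.e.\ that $\sum_{m\geq1}\Pi_{m+1}\bigl|(\pi_{m+1}/\Pi_{m+1})^{s}-(\pi_m/\Pi_m)^{s}\bigr|\to0$ as $I_s(\pi)\to0$. Setting $a_m:=\pi_m^{s}$, $b_m:=\Pi_m^{s}$ and splitting $(a_{m+1}/b_{m+1})-(a_m/b_m)=(a_{m+1}-a_m)/b_{m+1}+a_m(1/b_{m+1}-1/b_m)$, the first part is bounded by $\sum_m\Pi_{m+1}^{1-s}|a_{m+1}-a_m|\leq I_s(\pi)$ when $s\leq1$. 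The second part is the genuinely delicate one: a naive bound $a_m/b_m\leq1$ destroys essential cancellation (it already fails for Cesaro weights, producing a nonvanishing constant), so one must retain the factor $(\pi_m/\Pi_m)^{s}$ and estimate $\sum_m\Pi_{m+1}^{1-s}(\pi_m/\Pi_m)^{s}(\Pi_m^{s}-\Pi_{m+1}^{s})$ by summation by parts, comparing it to the integral $\int t^{1-s}\,\mathrm{d}(t^{s})$ along the decreasing tail $t=\Pi_m$.

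I expect this analytic lemma, relating the total variation of the renormalized sequence $(\pi_m/\Pi_m)^{s}$ to that of $(\pi_m^{s})$ with the crucial weights $\Pi_{m+1}$, to be the main obstacle; Proposition \ref{imp} is the natural device for transferring the estimate between exponents (in particular when $s>1$, where $\Pi_{m+1}^{1-s}$ no longer simplifies the first part), and the degenerate case $s=+\infty$, in which $v_\lambda\equiv v_\infty$, is handled separately through the $I_\infty$-criterion. Finally, the ``in particular'' clause follows at once from the first statement, the lower bound $s\geq(\card K\,\card I)^{-\sqrt{\card K\,\card I}}$, and the monotonicity of the $p$-asymptotic value property in $p$ recorded in the remark following Proposition \ref{imp}.
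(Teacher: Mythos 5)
You follow the paper's skeleton almost exactly---the Shapley recursion for the renormalized continuation values, the comparison of $u_m$ with the fixed point $v_{\lambda_m}$, the $(1-\lambda_m)$-contraction, the telescoping identity $\prod_{l\leq m}(1-\lambda_l)=\Pi_{m+1}$, and the Puiseux estimate (\ref{plip})---but your proof stops short of the one step that makes it work, and that step is not a routine lemma. Two concrete problems. First, your master inequality $\|v_\pi-v_{\pi_1}\|_\infty\leq C\sum_{m\geq1}\Pi_{m+1}\left|\lambda_{m+1}^{s}-\lambda_m^{s}\right|$ applies (\ref{plip}) to every pair $(\lambda_m,\lambda_{m+1})$, whereas (\ref{plip}) is only valid on $[0,\beta)^2$: along the tail the relative weights $\lambda_m=\pi_m/\Pi_m$ need not be small (if $\pi$ has finite support the last nonzero one equals $1$), so the estimate is invoked outside its domain of validity; the same issue infects your treatment of $s=+\infty$, since $v_\lambda=v_\infty$ is only known for $\lambda<\beta$. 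Second---and you say this yourself---the ``analytic lemma'' that $\sum_{m}\Pi_{m+1}\left|\lambda_{m+1}^{s}-\lambda_m^{s}\right|$ vanishes as $I_s(\pi)\to0$ is the heart of the matter, and you do not prove it: the summation-by-parts sketch is not carried out, and your fallback on Proposition \ref{imp} for $s>1$ cannot work, because that proposition bounds $I_{p'}$ by $I_p$ only for $p\leq p'$ (the remark following it shows no reverse bound exists), while for $s>1$ your first term carries the factor $\Pi_{m+1}^{1-s}$, which blows up along the tail. A proof that ends with ``I expect this lemma to be the main obstacle'' has not crossed the obstacle.

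What is missing is the paper's single key device: an $\epsilon$-truncation of the telescoping sum. The paper stops the recursion at the last index $N_0$ for which the tail mass is still at least $\epsilon$, and bounds the discarded remainder crudely by $\epsilon+I_\infty(\pi)$, using only that all values lie in $[0,1]$. On the retained range the tail masses are bounded below by $\epsilon$, which simultaneously (i) forces $\lambda_r\leq I_\infty(\pi)/\epsilon<\beta$, so (\ref{plip}) is legitimately applicable (and the $s=\infty$ case becomes trivial), and (ii) reduces your delicate lemma to two elementary bounds, valid uniformly in $s\in(0,+\infty)$: the variation part of each increment is at most $\epsilon^{-s}\left|\pi_{r+1}^{s}-\pi_r^{s}\right|$, and the renormalization-drift part is, by the Mean Value Theorem, at most $s\epsilon^{-1-s}\pi_r^{1+s}$, whose sum is at most $s\epsilon^{-1-s}I_\infty(\pi)^{s}\leq s\epsilon^{-1-s}I_s(\pi)$ by Proposition \ref{imp}. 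Altogether this gives
\begin{equation*}
\left\|v_{\pi}-v_{\lambda_0}\right\|_{\infty} \leq \epsilon+I_s(\pi)^{1/s} +C\left(\epsilon^{-s}+s \epsilon^{-1-s}\right)I_s(\pi),
\end{equation*}
and the theorem follows by letting $I_s(\pi)\to0$. Note that the truncation deliberately gives up exactness on the tail (an $\epsilon$ loss) in exchange for the uniform lower bound $\Pi\geq\epsilon$ on the head; your version, which keeps the full infinite sum with weights $\Pi_{m+1}$, demands a genuinely sharper inequality that the theorem does not require and that you have not established. If you graft the truncation onto your recursion, your argument becomes the paper's; without it, your central estimate is unjustified in the tail and unproven on the head.
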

\begin{proof}
Neyman has shown that in a stochastic game the convergence of $(v_n)$ can be deduced from the Shapley equation and the fact that $(v_{\lambda})$ is absolutely continuous with respect to $\lambda$ (see \cite[Theorem C.8, p. 177]{S02b}). We use similar tools. 

Let $\pi \in \Delta(\m{N}^*)$ and $r \in \m{N}$ such that there exists $m \geq r+2$, $\pi_m \neq 0$. A sequence of weights $\pi^r \in \Delta(\m{N}^*)$ is defined in the following way: for $m \in \m{N}^*$,

\begin{equation*}
\pi^r_m:= \left\{
\begin{array}{ll}
\displaystyle \frac{\pi_{m+r}}{\displaystyle \sum_{m' \geq r+1} \pi_{m'}}  & \mbox{if} \ \ \pi_{m+r} \neq 0, \\
0 & \mbox{if} \ \ \pi_{m+r}=0.
\end{array}
\right.
\end{equation*}

Let $\lambda_r:=\pi^r_1$. Let $k \in K$. Shapley equations yield (see \cite{CLS12}):
\begin{eqnarray} \label{dyn11}
v_{\pi^r}(k)&=& \max_{x \in \Delta(I)} \min_{y \in \Delta(J)}
\left\{ \lambda_r g(k,x,y)+(1-\lambda_r)\m{E}^k_{x,y}(v_{\pi^{r+1}}) \right\}
\\
\label{dyn12}
&=& \min_{y \in \Delta(J)} \max_{x \in \Delta(I)} 
\left\{ \lambda_r g(k,x,y)+(1-\lambda_r)\m{E}^k_{x,y}(v_{\pi^{r+1}}) \right\}
\end{eqnarray}
and
\begin{eqnarray} \label{dyn21}
v_{\lambda_r}(k)&=& \max_{x \in \Delta(I)} \min_{y \in \Delta(J)}
\left\{ \lambda_r g(k,x,y)+(1-\lambda_r)\m{E}^k_{x,y}(v_{\lambda_{r}}) \right\}
\\ 
\label{dyn22}
&=&  \min_{y \in \Delta(J)} \max_{x \in \Delta(I)}
\left\{ \lambda_r g(k,x,y)+(1-\lambda_r)\m{E}^k_{x,y}(v_{\lambda_{r}}) \right\},
\end{eqnarray}
where 
\begin{equation} \label{deftrans}
\m{E}^k_{x,y}(f):=\sum_{(k',i,j) \in K \times I \times J} x(i) y(j)q(k,i,j)(k') f(k')
\end{equation}
and 
\begin{equation} \label{defpayoff}
\displaystyle g(k,x,y):=\sum_{(i,j) \in I \times J} x(i) y(j) g(k,i,j).
\end{equation}
Note that these equations also hold when $\lambda_r=0$, with the convention $v_0:=v_{\infty}$. 
Let $x \in \Delta(I)$ be optimal in (\ref{dyn11}) and $y \in \Delta(J)$ be optimal in (\ref{dyn22}). We have
\begin{equation} \label{indyn1}
v_{\pi^r}(k) \leq \lambda_r g(k,x,y)+(1-\lambda_r)\m{E}^k_{x,y}(v_{\pi^{r+1}})
\end{equation}
and
\begin{equation} \label{indyn2}
v_{\lambda_r}(k) \geq \lambda_r g(k,x,y)+(1-\lambda_r)\m{E}^k_{x,y}(v_{\lambda_{r}}).
\end{equation}
Combining the two inequalities yields:
\begin{equation*}
v_{\pi^r}(k)-v_{\lambda_r}(k) \leq (1-\lambda_r)\left\|v_{\pi^{r+1}}-v_{\lambda_r}\right\|_\infty.
\end{equation*}
Symmetrically, if $x' \in \Delta(I)$ is optimal in (\ref{dyn12}) and $y' \in \Delta(J)$ is optimal in (\ref{dyn21}), then
\begin{equation*}
v_{\lambda_r}(k)-v_{\pi^r}(k) \leq (1-\lambda_r)\left\|v_{\pi^{r+1}}-v_{\lambda_r}\right\|_\infty,
\end{equation*} 
and thus
\begin{equation} \label{maj}
\left\|v_{\pi^r}-v_{\lambda_r}\right\|_\infty \leq (1-\lambda_r)\left\|v_{\pi^{r+1}}-v_{\lambda_r}\right\|_\infty.
\end{equation}
Let $\displaystyle \Pi_r:=\prod_{r'=0}^{r-1} (1-\lambda_{r'})=
\sum_{m \geq r+1} \pi_m$. Note that $\displaystyle \lim_{r \rightarrow+\infty} \Pi_r=0$. The last inequality yields:
\begin{equation*}
\Pi_r \left\|v_{\pi^r}-v_{\lambda_r}\right\|_\infty \leq
\Pi_{r+1} \left\|v_{\pi^{r+1}}-v_{\lambda_{r+1}}\right\|_\infty
+\Pi_{r+1} \left\|v_{\lambda_{r+1}}-v_{\lambda_r}\right\|_\infty.
\end{equation*}
Let $N \in \m{N}^*$ such that there exists $m \geq N+1$, $\pi_m \neq 0$. Summing this inequality over $r \in \left\{0,1,...,N-1 \right\}$ yields:
\begin{equation} \label{in1}
\left\|v_{\pi}-v_{\lambda_0}\right\|_\infty \leq 
\Pi_{N} \left\|v_{\pi^{N}}-v_{\lambda_{N}}\right\|_\infty
+ \sum_{r=1}^N \Pi_{r} \left\|v_{\lambda_{r}}-v_{\lambda_{r-1}}\right\|_\infty.
\end{equation}
Let $\epsilon \in (0,1)$. Let $N_0:=\max \left\{N \geq 1 \ | \ \Pi_{N} \geq \epsilon \right\}$. We have $\Pi_{N_0} \leq \epsilon+I_{\infty}(\pi)$. For $N=N_0$, inequality (\ref{in1}) writes:
\begin{equation} \label{in2}
\left\|v_{\pi}-v_{\lambda_0}\right\|_{\infty} \leq \epsilon+I_{\infty}(\pi) +\sum_{r=1}^{N_0} \left\|v_{\lambda_{r}}-v_{\lambda_{r-1}}\right\|_\infty.
\end{equation}
Assume that $I_{\infty}(\pi) < \epsilon \beta$ (see equation (\ref{puiseux}) for the definition of $\beta$). 
\\
Thus $\lambda_r \leq I_{\infty}(\pi)/\Pi_{N_0}<\beta$ for all $r \in \left\{0,1,...,N_0 \right\}$. If $s=\infty$, then
\\
 $\sum_{r=1}^{N_0} \left\|v_{\lambda_{r}}-v_{\lambda_{r-1}}\right\|_\infty=0$, and the last inequality proves that $\Gamma$ has an $\infty$-asymptotic value. Assume now that $s<\infty$. 
Using (\ref{plip}), let us majorize the term on the right in inequality (\ref{in2}):
\begin{equation*}
\sum_{r=1}^{N_0} \left\|v_{\lambda_{r}}-v_{\lambda_{r-1}}\right\|_\infty
\leq C \sum_{r=1}^{N_0} \left|(\lambda_{r})^s-(\lambda_{r-1})^s \right|.
\end{equation*}
Let $r \in \left\{1,2,...,N_0\right\}$. The quantity $\left|(\lambda_r)^s-(\lambda_{r-1})^s\right|$ is smaller than 
\begin{equation*}
\left|\frac{(\pi_{r+1})^s}{\left( \displaystyle \sum_{m \geq r+1} \pi_m \right)^s}-\frac{(\pi_r)^s}{\left(\displaystyle \sum_{m \geq r+1} \pi_m\right)^s}\right|+
\left|\frac{(\pi_{r})^s}{\displaystyle \left(\sum_{m \geq r+1} \pi_m \right)^s}-\frac{(\pi_r)^s}{\displaystyle \left(\sum_{m \geq r} \pi_m\right)^s}\right|.
\end{equation*}
By definition of $N_0$, we have 
\begin{equation*}
\sum_{m \geq r} \pi_m \geq \sum_{m \geq r+1} \pi_m \geq \epsilon.
\end{equation*}
Therefore we can majorize the term on the left by
$\epsilon^{-s} \displaystyle \left|(\pi_{r+1})^s-(\pi_r)^s\right|$. As for the term on the right, by the Mean Value theorem we have
\begin{eqnarray*}
\left(\displaystyle \sum_{m \geq r+1} \pi_m \right)^{-s}-\left(\sum_{m \geq r} \pi_m \right)^{-s} &\leq& 
s \left(\sum_{m \geq r+1} \pi_m \right)^{-1-s} \pi_r
\\
&\leq& s \epsilon^{-1-s} \pi_r.
\end{eqnarray*}
Finally we have
\begin{eqnarray*}
\sum_{r \geq 1} \left|(\lambda_r)^s-(\lambda_{r-1})^s\right| &\leq& \sum_{r \geq 1} \left(\epsilon^{-s}\left|(\pi_{r+1})^s-(\pi_r)^s\right|+s \epsilon^{-1-s} (\pi_r)^{1+s} \right) 
\\
&\leq& \epsilon^{-s} I_s(\pi)+s \epsilon^{-1-s} I_{\infty}(\pi)^s
\\
&\leq& 
(\epsilon^{-s}+s \epsilon^{-1-s})I_s(\pi).
\end{eqnarray*}
Plugging this into (\ref{in2}) gives
\begin{equation*}
\left\|v_{\pi}-v_{\lambda_0}\right\|_{\infty} \leq \epsilon+I_s(\pi)^{1/s} +C\left(\epsilon^{-s}+s \epsilon^{-1-s}\right)I_s(\pi).
\end{equation*} 
Thus for $I_s(\pi)$ sufficiently small, we have both
$\displaystyle \left\|v_{\pi}-v_{\lambda_0}\right\|_\infty \leq \epsilon$ and
\\
$\displaystyle \left\|v_{\lambda_0}-v_{\infty}\right\|_\infty \leq \epsilon$, which concludes the proof.
$\squareforqed$
\end{proof}
\begin{corollary} \label{ascv}
Let $(\pi^n) \in (\Delta(\m{N}^*))^{\m{N}}$ such that for all $p>0$, $\displaystyle \lim_{n \rightarrow +\infty} I_p(\pi^n)=0$. Then in any stochastic game, $(v_{\pi^n})_{n \geq 0}$ converges to $v_{\infty}$.
\end{corollary}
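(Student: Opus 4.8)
The plan is to reduce everything to Theorem \ref{ex}, which already does all the work: for any game it establishes the existence of an $s$-asymptotic value, and the lower bound $s \geq (\card K \card I)^{-\sqrt{\card K \card I}}$ recalled just before that theorem guarantees that the order $s$ is always strictly positive. So the only thing the corollary needs is to extract from the hypothesis a single finite exponent $p_0 > 0$ for which the game possesses a $p_0$-asymptotic value, and then to invoke the defining estimate of that notion.

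First I would fix a stochastic game $\Gamma$ and call $s \in (0,+\infty]$ its order, then split according to whether $s$ is finite. If $s<\infty$, I set $p_0:=s$; Theorem \ref{ex} says precisely that $\Gamma$ has an $s$-asymptotic value, that is, a $p_0$-asymptotic value. If $s=+\infty$, Theorem \ref{ex} gives an $\infty$-asymptotic value, and by the first item of the Remark following the definition of the $p$-asymptotic value (a $p'$-asymptotic value yields a $p$-asymptotic value for every $p\leq p'$), $\Gamma$ also has a $1$-asymptotic value; here I set $p_0:=1$. In both cases $\Gamma$ has a $p_0$-asymptotic value for some finite $p_0>0$ depending only on $\Gamma$.

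The conclusion is then immediate. Fix $\epsilon>0$. By definition of the $p_0$-asymptotic value there is $\alpha>0$ such that $I_{p_0}(\pi)\leq \alpha$ forces $\left\|v_{\pi}-v_{\infty}\right\|_{\infty}\leq \epsilon$. By hypothesis $\lim_{n\rightarrow+\infty} I_{p_0}(\pi^n)=0$, so for $n$ large enough $I_{p_0}(\pi^n)\leq \alpha$, and hence $\left\|v_{\pi^n}-v_{\infty}\right\|_{\infty}\leq \epsilon$. As $\epsilon$ was arbitrary, $(v_{\pi^n})$ converges to $v_{\infty}$.

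There is really no serious obstacle here: the statement is a pure plug-in of Theorem \ref{ex}. The only point that requires a little care is that the order $s$ varies with the game and may be infinite, which is exactly why the hypothesis is phrased for all $p>0$ rather than for a single fixed one; handling the case $s=+\infty$ is what forces me to pass from the $\infty$-asymptotic value to a finite exponent $p_0$, using either the Remark above or, equivalently, the bound $I_{\infty}(\pi)\leq (I_p(\pi))^{1/p}$ from Proposition \ref{imp} to see that $I_{\infty}(\pi^n)\rightarrow 0$ as well.
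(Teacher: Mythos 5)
Your proof is correct and takes essentially the same route as the paper: the paper's own proof is exactly the one-liner ``$\Gamma$ has order $s$, Theorem \ref{ex} gives an $s$-asymptotic value, and $I_s(\pi^n)\rightarrow 0$ by hypothesis.'' The only difference is that you treat the case $s=+\infty$ explicitly by passing to a finite exponent $p_0$ via the Remark (equivalently, via the bound $I_{\infty}(\pi)\leq (I_p(\pi))^{1/p}$ of Proposition \ref{imp}), which is a slightly more careful handling of a point the paper glosses over when it applies the hypothesis directly at $p=s$ even for $s=+\infty$.
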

\begin{proof}
Let $\Gamma$ be a stochastic game of order $s \in (0,+\infty]$. By Theorem \ref{ex}, $\Gamma$ has a $s$-asymptotic value. By assumption, we have $\displaystyle \lim_{n \rightarrow +\infty} I_s(\pi^n)=0$, thus $(v_{\pi^n})_{n \geq 0}$ converges to $v_{\infty}$.
$\squareforqed$
\end{proof}
The following remarks show that for the asymptotic approach, Corollary \ref{ascv} is more general than Corollary \ref{MNd} and Theorem \ref{RV}.
\begin{remark} \label{rq} \mbox{} \\
\begin{itemize}
\item[-]
When $(\pi_m)_{m \geq 1}$ is decreasing, for all $p>0$, $I_p(\pi)=(\pi_1)^p$. According to Corollary \ref{ascv}, $(v_{\pi})$ converges when $\pi_1$ goes to 0 (compare with the asymptotic approach in Corollary \ref{MNd}).
\item[-]
When $s=1$, the mapping $\lambda \rightarrow v_\lambda$ is Lipschitz. For instance, this is the case when $\Gamma$ is a Markov decision process: see \cite[Chapter 5, Proposition 5.20]{S02b}. By Theorem \ref{ex}, $\Gamma$ has a $1$-asymptotic value (compare with the asymptotic approach in Theorem \ref{RV}). 
\item[-]
For $(l,n) \in \m{N}\times \m{N}^*$, let $\displaystyle \pi^{l,n}=n^{-1} 1_{l+1 \leq m \leq l+n}$. The $(\pi_{l,n})$ are non-monotonic sequences, thus Corollary \ref{MNd} does not apply. Nevertheless, $\displaystyle I_s(\pi^{l,n})=2 n^{-s}$. Consequently, for any $\epsilon>0$, there exists $\bar{n} \in \m{N}^*$ such that for all $n \geq \bar{n}$, for all $l \in \m{N}$, $I_s(\pi^{l,n}) \leq \epsilon$. By Theorem \ref{ex}, $\Gamma$ has a $s$-asymptotic value, and we deduce that
\begin{equation*}
 \lim_{n \rightarrow +\infty} \sup_{l \in \m{N}} \left\|v_{\pi^{l,n}}-v_{\infty} \right\|_{\infty}=0.
\end{equation*}
\end{itemize}
\end{remark}

\subsection{Absorbing games} \label{AG}
In this subsection, we relax the finiteness assumption on the action sets.
An absorbing state is a state such that once it is reached, the game remains in this state forever, and the payoff does not depend on the actions (\textit{absorbing payoff}). An absorbing game is a stochastic game that has at most one nonabsorbing state.
\\
\cite{MNR09} have proved the existence of the uniform value in absorbing games with compact action sets and separately continuous transition and payoff functions. In particular, $(v_{\lambda})$ converges.
Adapting the proof of the previous subsection, we prove the following proposition:

\begin{proposition} \label{exabs}
Let $\Gamma$ be an absorbing game with compact action sets and separately continuous transition and payoff functions. Then, $\Gamma$ has an $\infty$-asymptotic value.
\end{proposition}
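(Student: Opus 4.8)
The plan is to reduce everything to the unique nonabsorbing state, to transport the Shapley machinery of the previous subsection to the compact-action setting, and then to replace the order-based control of Theorem \ref{ex}, which is unavailable here, by a control specific to absorbing games.

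First I would observe that on every absorbing state $k$ the stage payoff is a constant depending neither on the actions nor on the evaluation, so $v_\pi(k)=v_\infty(k)$ for all $\pi$; hence $\left\|v_\pi-v_\infty\right\|_\infty=\left|v_\pi(k^\star)-v_\infty(k^\star)\right|$, where $k^\star$ is the nonabsorbing state, and only this scalar quantity must be bounded. To run the argument of Theorem \ref{ex} with compact $I,J$, I would note that the one-shot games in the Shapley equations (\ref{dyn11})--(\ref{dyn22}) still have a value: the relevant functional is affine in each player's mixed action (a probability measure on $I$, resp.\ $J$) and separately continuous, so Sion's minmax theorem applies; existence of $v_\infty$ and convergence of $(v_\lambda)$ are supplied by \cite{MNR09}. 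The shifted sequences $\pi^r$, $\lambda_r=\pi^r_1$ and the recursion leading to (\ref{maj}) then carry over verbatim, at $k^\star$ becoming a scalar recursion.

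The decisive point is that the bound on $\sum_r\left\|v_{\lambda_r}-v_{\lambda_{r-1}}\right\|_\infty$ used in Theorem \ref{ex}, which rests on the Puiseux order $s$, is not available: there is no Puiseux expansion for compact action sets, and $I_\infty(\pi)\to0$ does not force $I_s(\pi)\to0$, so this term need not be small for oscillating weights. I would therefore use the absorbing structure directly. Writing $\rho(x,y)$ for the absorption probability out of $k^\star$ and $\bar g(x,y)$ for the expected continuation value conditional on absorption (a $\pi$-independent combination of the $v_\infty$-values of absorbing states), the payoff of a stationary pair $(x,y)$ in $\Gamma^{k^\star}_\pi$ is
\begin{equation*}
g(k^\star,x,y)\,S(\rho(x,y))+\bar g(x,y)\,\bigl(1-S(\rho(x,y))\bigr),\qquad S(\rho):=\sum_{m\ge1}\pi_m(1-\rho)^{m-1},
\end{equation*}
the discounted game corresponding to $S_\lambda(\rho)=\lambda/(\lambda+(1-\lambda)\rho)$. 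The key elementary estimate is $S(\rho)\le I_\infty(\pi)/\rho$ for $\rho>0$, together with $S(0)=1$, so that $S(\rho)\to 1_{\{\rho=0\}}$ uniformly on $\left\{\rho\ge\rho_0\right\}$ as $I_\infty(\pi)\to0$, exactly the limiting profile of $S_\lambda$ as $\lambda\to0$. This is precisely why $I_\infty$ is the only relevant quantity for absorbing games, in contrast with the general case.

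From here I would establish $v_\infty(k^\star)-\epsilon\le v_\pi(k^\star)\le v_\infty(k^\star)+\epsilon$ for $I_\infty(\pi)$ small by comparing the $\pi$-evaluation with a fixed small-$\lambda$ evaluation through the closeness of $S$ and $S_\lambda$ away from $\rho=0$, while checking that action pairs with $\rho<\rho_0$ (nearly nonabsorbing, hence behaving like the discounted game on a long horizon) contribute negligibly. The main obstacle is that $v_\pi(k^\star)$ is \emph{not} the value $\val_{x,y}$ of the stationary game above: against a stationary opponent a best response may time its absorption so as to load the stages carrying the most weight, and this timing gain can be strictly positive (already for one-player absorbing games one finds $v_\pi(k^\star)=\sup_m\pi_m$, which a stationary play does not attain exactly). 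I would show that this gain vanishes as $I_\infty(\pi)\to0$ uniformly in $\pi$ by controlling it with a supermartingale/potential built from a stationary $\lambda$-optimal strategy, in the spirit of \cite{MNR09}, so that the uniform smallness of the weights $\pi_m$ makes the per-stage exploitation negligible. The remaining compact-action technicalities (measurable selection of optimal actions, and approximation of $I,J$ by finite grids to import the finite-state computation of \cite{CLS12}) are routine once the weight-function comparison and the timing estimate are in place.
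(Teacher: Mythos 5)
Your reduction to the single nonabsorbing state, the transport of the Shapley equations (\ref{dyn11})--(\ref{dyn22}) to compact action sets, and your diagnosis of why the proof of Theorem \ref{ex} breaks down here (no Puiseux expansion, and smallness of $I_\infty(\pi)$ does not control $I_s(\pi)$) are all correct and match the paper's setup. But your proposal then abandons the Shapley recursion in favor of a stationary-strategy analysis, and there the essential step is missing. As you yourself note, $v_\pi(k^\star)$ is \emph{not} the minmax of $g(k^\star,x,y)S(\rho)+\bar g(x,y)(1-S(\rho))$ over stationary pairs: best responses can time absorption to exploit the profile of $\pi$, and for oscillating weights this timing gain is exactly what is at stake (it is the mechanism behind the counterexample of Subsection \ref{ce}). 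Your plan to control that gain ``with a supermartingale/potential built from a stationary $\lambda$-optimal strategy, in the spirit of \cite{MNR09}'' is not an argument but a restatement of the proposition: proving that the timing gain vanishes uniformly as $I_\infty(\pi)\to 0$ \emph{is} the content of the result, and nothing in your sketch indicates how the potential would be constructed, why the per-stage exploitation is negligible against non-monotone weights, or how the ``nearly nonabsorbing'' regime $\rho<\rho_0$ is actually handled. As it stands, the proof has a hole at its center.

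What you missed is that one need not leave the recursion at all. Because $v_{\pi^{r+1}}$ and $v_{\lambda_r}$ coincide on absorbing states, the inequalities (\ref{indyn1})--(\ref{indyn2}) give a \emph{scalar, signed} recursion at the nonabsorbing state,
\begin{equation*}
(v_{\pi^r}-v_{\lambda_r})(k^\star)\ \le\ (1-\lambda_r)\,\mu_r\,(v_{\pi^{r+1}}-v_{\lambda_r})(k^\star),
\end{equation*}
where $\mu_r$ is the non-absorption probability under the optimal one-shot actions. Unfolding it with $\Pi_r=\prod_{r'<r}(1-\lambda_{r'})\mu_{r'}$ yields (\ref{majabs}), in which the differences $v_{\lambda_r}-v_{\lambda_{r-1}}$ appear \emph{with their signs}, weighted by the decreasing sequence $(\Pi_r)$. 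An Abel summation (summation by parts) then bounds $\sum_{r=1}^{N_0}\Pi_r\bigl(v_{\lambda_r}-v_{\lambda_{r-1}}\bigr)$ by $\left|V-v_{\lambda_0}\right|+\left|V-v_{\lambda_{N_0}}\right|$, where $V:=\sup_{\lambda\in[0,I_\infty(\pi)/\epsilon]}v_\lambda(k^\star)$; this tends to $0$ as $I_\infty(\pi)\to 0$ because $(v_\lambda)$ converges, by \cite{MNR09}, and the symmetric inequality handles the negative part. No modulus of continuity of $\lambda\mapsto v_\lambda$, no stationary-strategy reduction, no measurable selection, and no finite-grid approximation is needed: the one-dimensionality of the nonabsorbing part is precisely what lets one keep signs and sum by parts, which is the step your weight-comparison route tries, and fails, to substitute for.
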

\begin{remark}
For finite $I$ and $J$, this result was stated in \cite{CLS12}, with a sketch of proof. Here, we provide a complete and simpler demonstration, which holds in a more general framework. 
\end{remark}
\begin{proof}
Again, we adopt the convention $v_0:=v_{\infty}$.
The Shapley equations (\ref{dyn11}), (\ref{dyn12}), (\ref{dyn21}) and (\ref{dyn22}) still hold true for compact action sets (see \cite{MP70}). The only difference is that in (\ref{deftrans}) and (\ref{defpayoff}), the sum has to be replaced by an integral.
\\
If $k^*$ is an absorbing state, we have $v_{\pi}(k^*)=v_\lambda(k^*)$, for any $\pi \in \Delta(\m{N}^*)$ and $\lambda \in [0,1]$. Let $k$ be the only non-absorbing state of the game, and $r \in \m{N}$. In the previous proof, inequalities (\ref{indyn1}) and (\ref{indyn2}) yield:
\begin{equation*}
(v_{\pi^r}-v_{\lambda_r})(k) \leq (1-\lambda_r)\mu_r (v_{\pi^{r+1}}-v_{\lambda_r})(k),
\end{equation*}
where $\mu_r$ is the probability that the game is not absorbed, when Player 1 (resp. 2) plays an optimal strategy $x$ (resp. $y$) in  (\ref{dyn11}) (resp. (\ref{dyn22})). 
In what follows, for simplicity we omit the variable $k$.
\\
Let $\displaystyle \Pi_r=\prod_{m=0}^{r-1} (1-\lambda_r)\mu_r$. Relying on the same steps as in the previous proof, we get the analogous of (\ref{in2}), where $N_0$ is defined similarly:
\begin{equation} \label{majabs}
(v_{\pi}-v_{\lambda_0}) \leq \left(\epsilon+I_{\infty}(\pi) \right)+\sum_{r=1}^{N_0} \Pi_r (v_{\lambda_{r}}-v_{\lambda_{r-1}}).
\end{equation}
Let us majorize the right-hand side. The sequence $(\Pi_r)_{r \geq 0}$ is decreasing. Moreover, for all $r \in \left\{1,2,...,N_0 \right\}$, we have
\begin{equation*} 
\sum_{m \geq r+1} \pi_m \geq \Pi_r \geq \epsilon.
\end{equation*}
 Hence, $\lambda_r  \in \left[0,I_{\infty}(\pi)/\epsilon \right]$. Let $V:=\sup_{\lambda \in  \left[0,I_{\infty}(\pi)/\epsilon \right]} v_\lambda$. We have:
\begin{eqnarray*}
\sum_{r=1}^{N_0} \Pi_r (v_{\lambda_{r}}-v_{\lambda_{r-1}})
&=&\sum_{r=1}^{N_0} \Pi_r v_{\lambda_{r}}-\sum_{r=0}^{N_0-1} \Pi_{r+1} v_{\lambda_{r}}
\\
&=&\sum_{r=1}^{N_0-1} (\Pi_r-\Pi_{r+1})v_{\lambda_r}+\Pi_{N_0} v_{\lambda_{N_0}}
-\Pi_1 v_{\lambda_0} \\
&\leq & V
\sum_{r=1}^{N_0-1} (\Pi_r-\Pi_{r+1})+\Pi_{N_0} v_{\lambda_{N_0}}
-\Pi_1 v_{\lambda_0} \\
&\leq & 
V(\Pi_1-\Pi_{N_0})+\Pi_{N_0} v_{\lambda_{N_0}}
-\Pi_1 v_{\lambda_0} \\
&= & \Pi_1\left(V-v_{\lambda_0}\right)-\Pi_{N_0}\left(V-v_{\lambda_{N_0}}\right)
\\
& \leq & \left|V-v_{\lambda_0}\right|+\left|V-v_{\lambda_{N_0}}\right|.
\end{eqnarray*}
Because $(v_\lambda)$ converges to $v_\infty$ when $\lambda$ goes to $0$, the term $\left|V-v_{\lambda}\right|$ vanishes when $\lambda$ goes to 0. Consequently, the right-hand side of the last inequality goes to $0$ when $I_\infty(\pi)$ goes to 0. Together with (\ref{majabs}), this shows that the positive part of $(v_{\pi}-v_{\lambda_0})$ goes to $0$ when
$I_\infty(\pi)$ goes to 0.
\\
Symmetrically, one can show that the negative part of $(v_{\pi}-v_{\lambda_0})$ goes to $0$ when $I_\infty(\pi)$ goes to 0. Hence, the proposition is proved.
$\squareforqed$
\end{proof}
\subsection{An example} \label{ce}
We construct a stochastic game of order $1/2$, which has no $p$-asymptotic value for any $p>1/2$. First, this shows that our main result (see Theorem \ref{ex}) cannot be improved, second this implies that Theorem \ref{RV} does not extend to the Two-Player Case.

Let us consider the following stochastic game $\Gamma$:
\begin{table}[H]
\label{game}
\centering
\caption{Transition and payoff functions in state $\omega_1$ and $\omega_2$}
   \vspace{0.4cm}
   \setlength{\extrarowheight}{4 pt}
\begin{tabular}{ |l|*{3}{c|}}
\hline 
$\omega_1$
& L & R \\\hline
T & $1$ & $\overrightarrow{1}$ \\\hline
M & 0 & 0 \\\hline
B & $\overrightarrow{1}$ & $1^*$ \\\hline
\end{tabular}
\hspace{1cm}
\begin{tabular}{|l|*{3}{c|}}
\hline 
$\omega_2$
&  L & R \\\hline
T & $0$ & $\overleftarrow{0}$ \\\hline
M & 0 & 0 \\\hline
B & $\overleftarrow{0}$ & $0^*$ \\\hline
\end{tabular}
\end{table}
The set of states of the game is $K=\left\{\omega_1,\omega_2,1^*,0^* \right\}$. The action set is $I=\left\{T,M,B\right\}$ for Player 1 and $J=\left\{L,R\right\}$ for Player 2. States $1^*$ and $0^*$ are absorbing states with absorbing payoff respectively $1$ and $0$. The payoff and transition functions in state $\omega_1$ (resp. $\omega_2$) are described by the left table (resp. the right one). The symbol $\overrightarrow{1}$ (resp. $\overleftarrow{0}$) means that the payoff is $1$ (resp. $0$) and the game moves on to state $\omega_2$ (resp. $\omega_1$). When there is no arrow or star, this means that the game remains in the same state.
\\
In \cite{vigeral13}, a similar stochastic game $\Gamma'$ is mentioned. The only difference is that in $\Gamma'$, Player 1 has only the two actions $T$ and $B$. The uniform value $v_{\infty}'$ of $\Gamma'$ satisfies $\displaystyle v_{\infty}'(\omega_1)=v_{\infty}'(\omega_2)=1/2$. In addition, the order of $\Gamma'$ is $1/2$. Moreover, for all $\epsilon>0$, the stationary strategy $x$ (resp. $y$) for Player 1 (resp. 2) defined by $x(\omega_1)=x(\omega_2)= (1-\sqrt{\lambda}) \cdot T+\sqrt{\lambda} \cdot B$ (resp. $y(\omega_1)=y(\omega_2)=(1-\sqrt{\lambda}) \cdot L+\sqrt{\lambda} \cdot R$) is $\epsilon$-optimal in $\Gamma'_{\lambda}$, for $\lambda$ small enough (they are \textit{asymptotically optimal} strategies). 
\\
In our example, in $\Gamma_{\lambda}$, the action $M$ is dominated by $T$ in every state. Thus for all $\lambda \in (0,1]$, $v_{\lambda}=v'_{\lambda}$. In particular, $\Gamma$ has order $1/2$, and its uniform value $v_{\infty}$ satisfies $\displaystyle v_{\infty}(\omega_1)=v_{\infty}(\omega_2)=1/2$. In addition, the strategy $x$  (resp. $y$) is an asymptotically optimal stationary strategy for Player 1 (resp. 2) in $\Gamma_{\lambda}$. 

\begin{remark} \label{tradeoff} \mbox{}
\begin{itemize} 
\item
Assume that for some $\alpha>0$, Player 1 plays $(1-\alpha) \cdot T+\alpha \cdot B$ in state $\omega_2$ until the state changes. Whatever Player 2 plays, Player 1 spends at most a number of stages of order $\alpha^{-1}$ in state $\omega_2$ before moving to state $\omega_1$ or $0^*$, and the probability that  the state goes to $0^*$ and not to $\omega_1$ is at most of order $\alpha$. Hence, for Player 1 there is a trade-off between staying not too long in state $\omega_2$, and having a low probability of being absorbed in $0^*$. In view of what precedes, the optimal trade-off in $\Gamma_{\lambda}$ is $\alpha \approx \sqrt{\lambda}$. 
\item
Let $\theta \in \Delta(\m{N}^*)$. At some stage $m$ in $\Gamma_{\theta}$, the action $M$ may not be dominated by $T$ in state $\omega_1$. Indeed, if $\theta_m=0$, the stage payoff is 0 whatever the actions played. Thus, it is optimal for Player 1 to play $M$, because it makes the state remain in $\omega_1$. The example builds on this fact. By contrast, for any $\theta \in \Delta(\m{N}^*)$, in $\Gamma_{\theta}$, the action $M$ is dominated by $T$ in state $\omega_2$. In what follows, we build a family of strategies for Player 1 that all use action $M$ in state $\omega_2$, but this is only to make the proof easier. 
\end{itemize}

\end{remark}
\begin{theorem} \label{cep}
For all $p>1/2$, the game $\Gamma$ has no $p$-asymptotic value.
\end{theorem}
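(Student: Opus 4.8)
The plan is to contradict the definition of a $p$-asymptotic value directly. Fix $p>1/2$. It suffices to produce a family $(\pi^{(k)})_{k}\subset\Delta(\m{N}^*)$ with $I_p(\pi^{(k)})\to 0$ while $v_{\pi^{(k)}}(\omega_1)$ stays above $1/2+\epsilon_0$ for some fixed $\epsilon_0>0$; since $v_\infty(\omega_1)=1/2$, this forces $\|v_{\pi^{(k)}}-v_\infty\|_\infty\geq\epsilon_0$, so $\Gamma$ has no $p$-asymptotic value. Because $\Gamma$ has order $s=1/2$, Theorem \ref{ex} already yields $v_\pi\to v_\infty$ whenever $I_{1/2}(\pi)\to 0$; hence the family must be strongly non-monotone, with $I_{1/2}(\pi^{(k)})$ bounded below while $I_p(\pi^{(k)})\to 0$. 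I will in fact arrange $I_p(\pi^{(k)})\to 0$ for \emph{every} $p>1/2$ at once, so a single family settles the whole theorem.

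For the weights I would use a block profile. Pick a height $w_k\to 0$ and alternate $b_k$ \emph{active} blocks, on each of which $\pi_m\equiv w_k$ for $n_k$ consecutive stages, separated by long \emph{rest} stretches carrying zero weight; normalise by $b_k n_k w_k=1$. On such a monotone-free profile each block contributes one rise and one fall of size $w_k^{\,p}$, so $I_p(\pi^{(k)})\approx 2b_k w_k^{\,p}$, while the rest stretches, being flat at $0$, do not affect $I_p$. Choosing $n_k\asymp b_k\asymp w_k^{-1/2}$ (compatible with $b_k n_k w_k=1$) gives $I_{1/2}(\pi^{(k)})=\Theta(1)$ but $I_p(\pi^{(k)})=\Theta\big(w_k^{\,p-1/2}\big)\to 0$ for all $p>1/2$. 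This scale is dictated by the order: each active block then behaves like a discounted game with parameter $\lambda\asymp n_k^{-1}\asymp w_k^{1/2}$, the regime governed by the order-$1/2$ trade-off of Remark \ref{tradeoff} (optimal perturbation $\alpha\approx\sqrt\lambda$).

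The only structural asymmetry between the players is Player 1's freeze action $M$: in $\omega_1$ the profile $M$ keeps the state in $\omega_1$ whatever Player 2 does, and at a zero-weight stage this costs nothing (Remark \ref{tradeoff}). I would let Player 1 hold the state in $\omega_1$ with $M$ throughout the rest stretches, so the state evolves only on active stages, and arrange that each new block is entered from $\omega_1$; manoeuvring back to $\omega_1$ during a rest stretch costs no payoff, since the stage payoff in $\omega_2$ vanishes identically, and only a controllable $0^*$-absorption risk is incurred (as noted in Remark \ref{tradeoff}, restricting Player 1 to $M$ in $\omega_2$ is a harmless simplification that eases this bookkeeping). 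On an active stage in $\omega_1$ Player 1 plays the small perturbation $(1-\alpha_k)\cdot T+\alpha_k\cdot B$; since in $\omega_1$ every action but $M$ pays $1$, he banks payoff $1$ at every active stage spent in $\omega_1$, and Player 2's move only decides whether the state is absorbed in $1^*$ (favourable) or drifts to $\omega_2$. The core estimate is then of big-match type: balancing $\alpha_k$ against the two pure reactions of Player 2 --- $L$, which keeps the payoff-$1$ time in $\omega_1$ long but offers no absorption, versus $R$, which forces a fast drift but risks absorption in $1^*$ --- and letting the favourable absorptions accumulate over the $b_k$ blocks, one shows that the $\pi^{(k)}$-weighted payoff is at least a constant $c>1/2$, uniformly over Player 2, while the total $0^*$-absorption risk stays $o(1)$. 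Hence $v_{\pi^{(k)}}(\omega_1)\geq c>1/2$ for all large $k$.

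The main obstacle is exactly this uniform lower bound on the weighted payoff against \emph{all} strategies of Player 2. A single block, played in isolation, has value tending to $1/2$ from $\omega_1$; the gain above $1/2$ comes only from the repeated-block structure, through which the favourable $1^*$-absorption accumulates, so one must track the \emph{timing} of that absorption against the weight distribution and simultaneously keep the cumulative $0^*$-risk of the returns negligible. Turning this into a rigorous bound --- typically via a supermartingale or potential argument tuned to the profile, and checking that no mixed or history-dependent reaction of Player 2 beats the pure $L/R$ ones --- is where the work lies, and it is here that the threshold is sharp: the block scale $n_k\asymp b_k\asymp w_k^{-1/2}$ needed both to keep the payoff above $1/2$ and to control the absorptions is exactly the one for which $I_p(\pi^{(k)})\to 0$ holds precisely when $p>1/2$, which is why the example is tight against Theorem \ref{ex}.
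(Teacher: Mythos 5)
Your overall architecture (active blocks of constant height $w_k$ separated by long zero-weight rest stretches, Player 1 exploiting the freeze action $M$ during the rests, a big-match trade-off inside the blocks) is exactly the paper's, and your weight bookkeeping $I_p(\pi^{(k)})\approx 2b_kw_k^p$ is correct. The genuine flaw is the scaling $n_k\asymp b_k\asymp w_k^{-1/2}$, and it breaks precisely the ``core estimate'' you defer. Your mechanism needs two things simultaneously: against the reaction $L$, Player 1 must bank nearly the full weight of each block, which forces $\alpha_k n_k\to 0$ (each stage he plays $B$ against $L$ exits the block via $\overrightarrow{1}$, so against always-$L$ his payoff is about $f(\alpha_k n_k)$, where $f(x)=(1-e^{-x})/x$ is decreasing from $1$ to $0$); and the favourable $1^*$-absorptions must accumulate across blocks, which forces $\alpha_k b_k$ to be large (Player 2 risks absorption only with probability $\alpha_k$ in each block where he plays $R$, so against ``$R$ at the start of each block'' the payoff is about $1-f(\alpha_k b_k)$). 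When $b_k\asymp n_k$ these pull against each other, since $\alpha_k b_k\asymp\alpha_k n_k$: with ratio $b_k/n_k\to 1$ the minimum of the two payoffs above is at most $1/2+o(1)$ for every choice of $\alpha_k$, so no $c>1/2$ can come out of your balancing, and a backward-induction computation at this balanced scale indicates that the actual value of your configuration converges to a constant near $1/2$ (not above it). In other words, yours is exactly the scale at which the block structure reproduces the discounted trade-off of Remark \ref{tradeoff} and the value collapses back toward $v_\infty(\omega_1)=1/2$. Everything hinges on the ratio $b_k/n_k$, a parameter your proposal never pins down; ``$\asymp$'' is not enough.

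The paper resolves this by making the number of blocks polynomially larger than the block length, at the acknowledged price of one family per $p$: for $p=1/2+\epsilon$ it takes height $n^{-4}$, block length $N_1=\lfloor n^{2-\epsilon}\rfloor$ and $N_2=\lfloor n^{2+\epsilon}\rfloor$ blocks, so that the single perturbation $\alpha=n^{-2}$ satisfies both $\alpha N_1=n^{-\epsilon}\to 0$ and $\alpha N_2=n^{\epsilon}\to\infty$. This gives $v_{\pi'^n}(\omega_1)\to 1$ (not merely above $1/2$) by a robust counting of Player 2's $R$-plays, while $I_p(\pi'^n)\leq n^{2+\epsilon}\cdot n^{-4(1/2+\epsilon)}+o(1)=n^{-3\epsilon}+o(1)\to 0$. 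Note that the per-$p$ dependence is intrinsic to polynomial scalings: an imbalance $b_k\asymp w_k^{-1/2-\delta}$ yields $I_p\to 0$ only for $p>1/2+\delta$. Your ambition of a single family for all $p>1/2$ is what pushed you onto the razor's edge where the argument no longer works; it could only be rescued by a sub-polynomial imbalance (say $b_k\asymp w_k^{-1/2}\log(1/w_k)$ and $n_k\asymp w_k^{-1/2}/\log(1/w_k)$), together with a proof of the value bound in that regime along the paper's lines --- a construction genuinely different from the one your sketch dictates.
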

The remainder of the subsection is dedicated to the proof of Theorem \ref{cep}. Let us introduce the following piece of notation: given three sequences of strictly positive real numbers $(u_n)_{n \geq 1}$, $(v_n)_{n \geq 1}$ and $(w_n)_{n \geq 1}$, we write $u_n = v_n+o(w_n)$ if the sequence $([u_n-v_n]/{w_n})_{n \geq 1}$ converges to 0.

To simplify the presentation, we first show that $\Gamma$ has no $1$-asymptotic value. Let $n \in \m{N}^*$. For $\l \in \left\{0,1,...,n^3-1\right\}$, define $a_n(l):=l(n+n^5)+1$ and $b_n(l):=l(n+n^5)+n$. Let $\displaystyle E_1:=\cupp_{0 \leq l \leq n^3-1} \left\{a_n(l),a_n(l)+1,...,b_n(l) \right\}$. 
\\
We consider the sequence $\pi^n \in \Delta(\m{N}^*)$ defined by 
\\
$\displaystyle \pi^n_m:=n^{-4}$ if $m \in E_1$, and $\pi^n_m:=0$ otherwise.
We have
\begin{equation*}
I_1(\pi^n)=(2n^3-1)n^{-4},
\end{equation*}
thus $\displaystyle \lim_{n \rightarrow +\infty} I_1(\pi^n)=0$. We show below that $\displaystyle \lim_{n \rightarrow +\infty} v_{\pi^n}(\omega_1)=1$. 
\\
We consider the Markovian strategy $\sigma^n \in \Sigma$ for Player 1, described by the following table:
\begin{table}[H]
\caption{\label{strat} Strategy $\sigma^n$}
\centering
\vspace{0.4cm}
 \setlength{\extrarowheight}{2 pt}
\begin{tabular}{|l|*{3}{c|}}
\hline \backslashbox{state}{stage}
&$m \in E_1$ & $m \notin E_1$ \\\hline
$k_m=\omega_1$ & $\displaystyle \left(1-n^{-2} \right) \cdot T+n^{-2} \cdot B$ & $M$ \\\hline
$k_m=\omega_2$ & $M$ & $\displaystyle \left(1-n^{-4} \right) \cdot T+n^{-4} \cdot B$ \\\hline
\end{tabular}
\end{table}
We show that for any $\epsilon>0$, for any $n$ sufficiently large, $\sigma^n$ guarantees the payoff $1-\epsilon$ in $\Gamma^{\omega_1}_{\pi^n}$ for Player 1.

 Let $\tau^n$ be a pure Markovian best-response to $\sigma^n$ in $\Gamma^{\omega_1}_{\pi^n}$. Let $\Omega_n$ be the event 
\begin{equation*}
\Omega_n:= \capp_{l \in \left\{0,1,...,n^3-1 \right\}} \left\{k_{a_n(l)} \in \left\{\omega_1,1^* \right\} \right\}. 
\end{equation*}

When the state of the game is $\omega_2$ and $m \notin E_1$, Player 1 plays $B$ with probability $n^{-4}$. By Remark \ref{tradeoff}, Player 1 spends at most a number of stages of order $n^4$ in $\omega_2$, and the state goes to $0^*$ with a probability at most of order $n^{-4}$. As a result,  if for some $l \in \left\{0,...,n^3-1 \right\}$ the state is in $\omega_2$ at stage $b_n(l)+1$, the probability that it will move to $\omega_1$ before stage $a_n(l+1)$ is at least of order 
$1-n^{-4}$.
Once the state has moved to $\omega_1$, Player 1 plays $M$ and the state remains in $\omega_1$ until stage $a_n(l+1)$. Hence the probability that $k_{a_n(l)}$ lies in $\left\{\omega_1,1^* \right\}$ for any $l$ in $\left\{0,1,...,n^3-1 \right\}$ is at least of order $(1-n^{-4})^{n^3}= 1+o(1)$. This informal discussion provides intuition for the following lemma. 
\begin{lemma} \label{lemma11}
\begin{equation*}
\lim_{n \rightarrow+\infty} \m{P}^{\omega_1}_{\sigma^n,\tau^n}(\Omega_n)=1.
\end{equation*}
\end{lemma}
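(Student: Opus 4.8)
The plan is to show that the event $\Omega_n$ (the state lies in $\{\omega_1, 1^*\}$ at every block-start $a_n(l)$) has probability tending to $1$, by controlling the two ways the state can fail to be favorable at the start of a block: either it was absorbed into $0^*$ somewhere, or it is sitting in $\omega_2$ at time $a_n(l)$. The strategy $\sigma^n$ is Markovian and the best response $\tau^n$ is pure Markovian, so the state process $(k_m)$ is a time-inhomogeneous Markov chain with transitions determined stagewise by the table, and I can analyze it block by block on $\{a_n(l), \dots, a_n(l+1)-1\}$.

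First I would set up a one-block estimate. Fix $l$ and condition on $k_{a_n(l)} \in \{\omega_1, 1^*\}$. During the ``active'' window $E_1$ (stages $a_n(l)$ to $b_n(l)$) Player 1 plays $(1-n^{-2})\cdot T + n^{-2}\cdot B$ in $\omega_1$; during the ``passive'' window $m \notin E_1$ (stages $b_n(l)+1$ to $a_n(l+1)-1$) Player 1 plays $M$ in $\omega_1$ and $(1-n^{-4})\cdot T + n^{-4}\cdot B$ in $\omega_2$. The key observations, following Remark \ref{tradeoff}, are: (i) from $\omega_1$, playing $T$ or $B$ keeps the payoff-$1$ dynamics — the only way to leave $\{\omega_1,1^*\}$ is through $\omega_2$ via the $\overrightarrow{1}$ transitions, and absorption from $\omega_1$ goes to the \emph{good} state $1^*$; (ii) from $\omega_2$, with Player 1 mixing mass $n^{-4}$ on $B$, the expected sojourn time in $\omega_2$ is of order $n^4$ and the probability of being absorbed into $0^*$ rather than returning to $\omega_1$ is of order $n^{-4}$. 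Since the passive window has length $n^5 \gg n^4$, with high probability the chain, if it has entered $\omega_2$, either returns to $\omega_1$ or is absorbed into $0^*$ well before stage $a_n(l+1)$, and the bad outcome (absorption into $0^*$, or still being in $\omega_2$ at $a_n(l+1)$) has probability $O(n^{-4})$ per block.

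I would make this precise by bounding, for each block, $\m{P}(k_{a_n(l+1)} \notin \{\omega_1,1^*\} \mid k_{a_n(l)} \in \{\omega_1,1^*\})$. The contribution from absorption into $0^*$ is bounded using that each stage spent in $\omega_2$ contributes absorption probability $\le n^{-4}$ and the expected number of such stages over the whole block is $O(n^4)$ (geometric sojourn with parameter $\asymp n^{-4}$), giving $O(n^{-4} \cdot n^4) = O(1)$ — so here I must be careful: the right bound is that absorption probability per visit to $\omega_2$ is $O(n^{-4})$ and the number of \emph{excursions} into $\omega_2$ is controlled, not the number of stages. Concretely, each excursion into $\omega_2$ ends in return to $\omega_1$ or absorption into $0^*$, with absorption probability $O(n^{-4})$ per excursion, and the length $n^5$ of the passive window ensures that the excursion resolves with probability $1 - (1-n^{-4})^{n^5} = 1 - o(1)$ before the next block-start. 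The residual probability of still being in $\omega_2$ at $a_n(l+1)$ is therefore also $O(e^{-n})$, hence negligible. Taking a union bound over the $n^3$ blocks gives total failure probability $O(n^3 \cdot n^{-4}) = O(n^{-1}) \to 0$, whence $\m{P}^{\omega_1}_{\sigma^n,\tau^n}(\Omega_n) \to 1$.

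The main obstacle is getting the per-block absorption bound exactly right: one must separate the \emph{number of excursions into $\omega_2$} from the \emph{total time spent in $\omega_2$}, since a naive product ``(absorption rate per stage) $\times$ (stages in $\omega_2$)'' does not close. The clean route is to note that $B$ is played in $\omega_2$ with probability $n^{-4}$ at each passive stage, and from $\omega_2$ the transition structure sends the state to $0^*$ only on the $B$-and-$R$ branch (the $1^*$/$0^*$ absorbing entries), so the conditional probability of ever reaching $0^*$ during a single excursion is $O(n^{-4})$ regardless of Player $2$'s pure response $\tau^n$; combined with the fact that the very long passive window forces each excursion to terminate, this yields the uniform $O(n^{-4})$ per-block bad-event bound. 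Summing and invoking the union bound completes the argument.
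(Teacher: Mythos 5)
Your proof is correct and follows essentially the same route as the paper's: a per-block analysis conditioning on $k_{a_n(l)}\in\{\omega_1,1^*\}$, the observation that absorption into $0^*$ can only occur at the first stage of an $\omega_2$-excursion at which Player 2 plays $R$ (giving the $n^{-4}$ per-block bound), the $(1-n^{-4})^{n^5}$ bound on an unresolved excursion when Player 2 plays $L$ throughout, and a union bound over the $n^3$ blocks. The paper organizes the same content as an explicit case split on Player 2's pure Markov response, yielding the per-block success probability $\min\left\{1-(1-n^{-4})^{n^5},\,1-n^{-4}\right\}$ and then concluding by induction via $(1-n^{-4})^{n^3}=1+o(1)$, which matches your $O(n^{-1})$ total failure bound.
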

For notational convenience, in the proof of this lemma and the proof of the next proposition, for $n \in \m{N}^*$, $\m{P}^{\omega_1}_{\sigma^n,\tau^n}$ is denoted by $\mathbb{P}$ and $\m{E}^{\omega_1}_{\sigma^n,\tau^n}$ is denoted by $\m{E}$.
\begin{proof}

Let $n \in \m{N}^*$ and $l \in \left\{0,...,n^3-1 \right\}$. Let us minorize the probability $\mathbb{P}\left(k_{a_n(l+1)} \in \left\{ \omega_1,1^* \right\}| k_{a_n(l)}  \in \left\{ \omega_1,1^* \right\}\right)$. 
\\
First, notice that $\mathbb{P}\left(k_{b_n(l)+1} \neq 0^* |k_{a_n(l)} \in \left\{ \omega_1,\omega_1^* \right\}\right)=1$ (see Table \ref{strat}). Let us now analyze how the state may evolve during the block 
\\
$\left\{b_n(l)+1,b_n(l)+2,...,a_n(l+1)-1 \right\}$, discriminating between the case 
\\
$k_{b_n(l)}=1^*$, $k_{b_n(l)}=\omega_1$, and $k_{b_n(l)}=\omega_2$:
\begin{itemize}
\item [-]
$\mathbb{P}\left(k_{a_n(l+1)}=1^*|k_{b_n(l)+1}=1^*\right)=1$.
\item[-]
If $k_{b_n(l)+1}=\omega_1$, then Player 1 will play $M$ at each stage 
\\
$m \in \left\{b_n(l)+1,b_n(l)+2,...,a_n(l+1)-1 \right\}$. Therefore, the state will remain in $\omega_1$:
\\
$\mathbb{P}(k_{a_n(l+1)}=\omega_1|k_{b_n(l)+1}=\omega_1)=1$. 
\item[-]
If $k_{b_n(l)+1}=\omega_2$, then Player 1 will play $(1-n^{-4}) \cdot T + n^{-4} \cdot B$ as long as the state is $\omega_2$ and $m \leq a_n(l+1)-1$. We discriminate between two cases:
\begin{itemize}
\item
 if Player 2 plays $L$ as long as the state is $\omega_2$ and $m \leq a_n(l+1)-1$, the game will never be absorbed in $0^*$, and the probability that the state will move to $\omega_1$ before stage $a_n(l+1)$ is equal to 
$\displaystyle 1-\left(1-n^{-4}\right)^{n^5}$. If the state moves to $\omega_1$ at some stage $m \leq a_n(l+1)-1$, then Player 1 will play $M$ until stage $a_n(l+1)$, thus the state will remain in $\omega_1$. Consequently, in this case we have 
\begin{equation*}
\mathbb{P}(k_{a_n(l+1)}=\omega_1|k_{b_n(l)+1}=\omega_2)=1-\left(1-n^{-4}\right)^{n^5}.
\end{equation*}
\item
if Player 2 plays $R$ at one stage in $\left\{b_n(l)+1,b_n(l)+2,...,a_n(l+1)-1 \right\}$, and if at the first stage he does so the state is $\omega_2$, then with probability $1-n^{-4}$ the state will move to $\omega_1$. It will remain in $\omega_1$ until stage $a_n(l+1)$. If the state has already switched to $\omega_1$ before Player $2$ plays $R$, then it will  remain in $\omega_1$ until stage $a_n(l+1)$. Therefore, in this case we have 
\begin{equation*}
\mathbb{P}(k_{a_n(l+1)}=\omega_1|k_{b_n(l)+1}=\omega_2) \geq 1-n^{-4}.
\end{equation*}
\end{itemize}
The last two subcases show that 
\begin{equation*}
\mathbb{P}(k_{a_n(l+1)}=\omega_1|k_{b_n(l)+1}=\omega_2) \geq 
\displaystyle \min \left\{
1-\left(1-n^{-4} \right)^{n^5}, 1-n^{-4} \right\}.
\end{equation*}
\end{itemize}
This exhaustive study shows that
\begin{equation*}
\mathbb{P}(k_{a_n(l+1)} \in \left\{\omega_1,1^* \right\}|k_{a_n(l)} \in \left\{\omega_1,1^* \right\}) \geq 
\displaystyle \min \left\{
1-\left(1-n^{-4}\right)^{n^5}, 1-n^{-4} \right\}.
\end{equation*}
We have $\left(1-n^{-4}\right)^{n^5}=o(n^{-4})$, thus for $n$ large enough, the minimum in the above equation is reached at $1-n^{-4}$. 
 By induction, it yields
\begin{equation*}
\m{P}(\Omega_n) \geq \prod_{l=0}^{n^3-1} (1-n^{-4}) = 1+o(1),
\end{equation*}
and the lemma is proved.
$\squareforqed$
\end{proof}
Now we can prove the following proposition:
\begin{proposition}
The game $\Gamma$ has no $1$-asymptotic value.
\end{proposition}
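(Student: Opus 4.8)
The plan is to prove the sharper quantitative statement $\lim_{n \to +\infty} v_{\pi^n}(\omega_1) = 1$. Since $I_1(\pi^n) = (2n^3 - 1)n^{-4} \to 0$ while $v_{\infty}(\omega_1) = 1/2$, this rules out a $1$-asymptotic value at once: taking $\epsilon = 1/4$, no threshold $\alpha$ can work, because $v_{\pi^n}(\omega_1)$ stays bounded away from $v_{\infty}(\omega_1)$ even though $I_1(\pi^n)$ is eventually below $\alpha$. As $\sigma^n$ is one admissible strategy and $\tau^n$ a best reply, $v_{\pi^n}(\omega_1) \geq \gamma^{\omega_1}_{\pi^n}(\sigma^n, \tau^n)$, and $v_{\pi^n} \leq 1$; so it suffices to show that the guaranteed payoff $\gamma^{\omega_1}_{\pi^n}(\sigma^n, \tau^n)$ tends to $1$.

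First I would rewrite this payoff block by block. Under $\sigma^n$, at every stage $m \in E_1$ both $T$ and $B$ yield stage payoff $1$ in state $\omega_1$, and the payoff is $0$ in $\omega_2$ and in $0^*$, so $\mathbb{E}(g_m) = \mathbb{P}(k_m \in \{\omega_1, 1^*\})$. Writing $Z_l := \sum_{m = a_n(l)}^{b_n(l)} n^{-4} g_m$ for the weighted payoff collected in block $l$ and $\xi_l := k_{a_n(l)}$, the total payoff equals $\sum_{l=0}^{n^3 - 1} \mathbb{E}(Z_l)$. I would decompose each $\mathbb{E}(Z_l)$ according to $\xi_l$, retaining only the states $1^*$ and $\omega_1$ and setting $P_l := \mathbb{P}(\xi_l = 1^*)$, $p_l := \mathbb{P}(\xi_l = \omega_1)$. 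On $\{\xi_l = 1^*\}$ the block is already absorbed, so $Z_l = n^{-3}$.

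Next, using that $\tau^n$ is pure Markovian, I would split the blocks into \emph{quiet} ones (Player $2$ never plays $R$ in state $\omega_1$ during the block) and \emph{active} ones; this is a deterministic partition. In a quiet block entered in $\omega_1$, the state can leave $\omega_1$ only through Player $1$'s rare action $B$ (probability $n^{-2}$ per stage) and is never absorbed in $1^*$, so a Bernoulli estimate gives an expected time in $\omega_1$ of at least $n - 1/2$, whence $\mathbb{E}(Z_l \mid \xi_l = \omega_1) \geq n^{-3}(1 - n^{-1})$. In an active block entered in $\omega_1$, the first $R$-stage sends the state to $1^*$ with conditional probability at least $\tfrac{1}{2} n^{-2}$. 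Consequently the number $N := \sum_{l \text{ active}} \mathbf{1}_{\xi_l = \omega_1}$ of active blocks ever entered in $\omega_1$ obeys a geometric bound $\mathbb{P}(N > k) \leq (1 - \tfrac12 n^{-2})^{k}$ (each such entry is absorbed with probability at least $\tfrac12 n^{-2}$, after which no further entry in $\omega_1$ can occur), so $\sum_{l \text{ active}} p_l = \mathbb{E}(N) \leq 2n^2$.

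Assembling these bounds, keeping the $\{\xi_l = 1^*\}$ term for every block and the $\{\xi_l = \omega_1\}$ term for quiet blocks, I obtain
\[
\gamma^{\omega_1}_{\pi^n}(\sigma^n, \tau^n) = \sum_l \mathbb{E}(Z_l) \geq n^{-3}(1 - n^{-1}) \Big[ \sum_l (P_l + p_l) - \sum_{l \text{ active}} p_l \Big],
\]
and $\sum_l (P_l + p_l) = \sum_l \mathbb{P}(\xi_l \in \{\omega_1, 1^*\}) \geq n^3\, \mathbb{P}(\Omega_n)$ by Lemma \ref{lemma11}. Hence the payoff is at least $(1 - n^{-1})\big(\mathbb{P}(\Omega_n) - 2n^{-1}\big)$, which tends to $1$. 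The hard part is exactly the global, adaptive trade-off encoded in the active-block count: to suppress the in-block payoff Player $2$ must drive the state into $\omega_2$ by playing $R$, but every such attempt risks permanent absorption in $1^*$ (payoff $1$ forever). The crux is therefore the estimate $\mathbb{E}(N) = O(n^2) = o(n^3)$, which shows that Player $2$ can profitably push on only a negligible fraction of the $n^3$ blocks before being absorbed, so the overwhelming majority of blocks are quiet and each contributes essentially $n^{-3}$.
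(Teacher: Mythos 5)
Your proof is correct, and while it rests on the same foundations as the paper's, it assembles them in a genuinely different way. Both arguments use the same $\pi^n$, $\sigma^n$, a pure Markovian best reply $\tau^n$, Lemma \ref{lemma11}, the deterministic partition of blocks according to whether $\tau^n$ ever plays $R$ in state $\omega_1$ during the block (the paper's $M_n(l)$ versus $\overline{M_n(l)}$), and the same two per-block estimates: a quiet block entered at $\omega_1$ pays about $n^{-3}$, and an active block entered at $\omega_1$ triggers permanent absorption in $1^*$ with probability at least roughly $n^{-2}(1-n^{-2})^n$. The difference is the global accounting. The paper fixes a free parameter $\delta \in (0,1]$, introduces the cutoff index $l_n$ (the last block by which at most $\delta n^3$ active blocks have occurred), and argues by cases: either $l_n = n^3-1$ and the quiet blocks alone pay $1-2\delta$ (inequality (\ref{case1})), or $l_n < n^3-1$, in which case Player 2 has pushed at least $\delta n^3 - 1$ times, inequality (\ref{lemma}) forces absorption with probability $\beta_n \to 1$, and the weight remaining after stage $b_n(l_n)$ pays the rest (inequality (\ref{payoff2})). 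You eliminate both the parameter $\delta$ and the case analysis by bounding in expectation the number $N$ of active blocks entered at $\omega_1$: since each such entry leads to permanent absorption with conditional probability at least $\tfrac12 n^{-2}$, you get $\mathbb{E}(N) \leq 2n^2 = o(n^3)$, and the explicit bound $\gamma^{\omega_1}_{\pi^n}(\sigma^n,\tau^n) \geq (1-n^{-1})\bigl(\mathbb{P}(\Omega_n)-2n^{-1}\bigr)$ follows in one step. One point deserves a word of justification in a final write-up: the events underlying your geometric bound $\mathbb{P}(N>k) \leq (1-\tfrac12 n^{-2})^k$ are not independent, so it should be phrased as an induction over the successive (random) active blocks entered at $\omega_1$, conditioning on the past; this goes through because both strategies are Markov and $1^*$ is absorbing, and it is at the same level of rigor as the paper's own induction leading to (\ref{lemma}). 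As for what each approach buys: yours is shorter, parameter-free, and gives an explicit convergence rate for $v_{\pi^n}(\omega_1) \to 1$; the paper's threshold formulation isolates the strategic dichotomy (few pushes versus early absorption) that it then reuses almost verbatim in the proof of Theorem \ref{cep} — though your expectation bound adapts to those modified weights just as well, since $\mathbb{E}(N) \leq 2n^2$ remains negligible against the $\lfloor n^{2+\epsilon} \rfloor$ blocks used there.
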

\begin{proof}
Let $n \in \m{N}^*$. We minorize $\gamma_{\pi^n}^{\omega_1}(\sigma^n,\tau^n)$ by a quantity that goes to $1$ as $n$ goes to infinity. 
\\
The last lemma shows that with high probability, at the beginning of each block $\left\{a_n(l),a_n(l)+1,...,b_n(l) \right\}$, the state is either $\omega_1$ or $1^*$. Recall that these blocks exactly correspond to the stages where the payoff weight is nonzero. Hence, to get a good payoff between stage $a_n(l)$ and stage $b_n(l)$, Player 2 should make the state move from $\omega_1$ to $\omega_2$ at least before stage $b_n(l)$. If Player 2 plays $L$ at each stage $m \in \left\{a_n(l),a_n(l)+1,...,b_n(l) \right\}$, with probability greater than $1-(1-n^{-2})^n$, the state will remain in $\omega_1$ until stage $b_n(l)$. This probability goes to $1$ as $n$ goes to infinity, which is a bad outcome for Player 2. Thus, Player 2 should play $R$ at some stage $m \in \left\{a_n(l),a_n(l)+1,...,b_n(l) \right\}$. We show that:
\begin{itemize}
\item[-]
either the number of $l \in \left\{0,1,...,n^3-1 \right\}$ such that Player 2 plays at least one time $R$ 
in $\left\{a_n(l),a_n(l)+1,...,b_n(l) \right\}$ is small, and thus the total payoff in $\Gamma^{\omega_1}_{\pi^n}$ is close to $1$,
\item[-]
either the number of $l \in \left\{0,1,...,n^3-1 \right\}$ such that Player 2 
plays at least one time $R$ in $\left\{a_n(l),a_n(l)+1,...,b_n(l) \right\}$ is high. In this case, with probability close to 1, the state is absorbed in $1^*$ very rapidly, thus the total payoff in $\Gamma^{\omega_1}_{\pi^n}$ is close to 1. 
\end{itemize}
Let $n \in \m{N}^*$ and $l \in \left\{0,1,...,n^3-1 \right\}$, and $\Omega_n(l)$ be the event defined by
\begin{equation*}
\Omega_n(l):=\capp_{0 \leq l' \leq l} \left\{ k_{a_n(l')} \in \left\{\omega_1,1^* \right\} \right\}.
\end{equation*}
Note that $\Omega_n(n^3-1)=\Omega_n$. Let 
\begin{equation*}
M_n(l):=\left\{l' \in \left\{0,1,...,l \right\} \ | \ \exists m \in \left\{a_n(l),a_n(l)+1,...,b_n(l) \right\}, \tau^n(m,\omega_1)=R \right\},
\end{equation*}
and let $\overline{M_n(l)}:=\left\{0,1,...,l \right\} \setminus M_n(l)$.
If $l \in M_{n}(n^3-1)$, let 
\begin{equation*}
m_n(l):=\min \left\{ m \in \left\{a_n(l),a_n(l)+1,...,b_n(l) \right\} \ | \ \tau^n(m,\omega_1)=R \right\}. 
\end{equation*}
Fix $\delta \in (0,1]$. 
Let $l_n:=\max \left\{l \in \left\{0,1,...,n^3-1 \right\} \ | \ \card M_n(l) \leq \delta n^3 \right\}$. We show that between stages $1$ and $b_n(l_n)$, Player 2 did not play $R$ a sufficient number of times to impact the total payoff, and at stage $b_n(l_n)+1$, either $l_n=n^3-1$ and the game is finished, or he has played too many times $R$, in such a way that the state has been absorbed in $1^*$ with high probability.
\\
By definition of $l_n$, we have $\card M_n(l_n) \leq \delta n^3$, and if $l_n<n^3-1$, then $\card M_n(l_n) \geq \delta n^3-1$. 
\\
We have
\begin{eqnarray} \nonumber
\m{E}\left(\sum_{m=1}^{b_n(l_n)} \pi^n_m g_m \right) &=& 
\frac{1}{n^4}\sum_{l=0}^{l_n} \m{E}\left(
\sum_{m=a_n(l)}^{b_n(l)} g_m \right)
\\
\label{inpay1}
&\geq &
\frac{1}{n^4} \sum_{l \in \overline{M_n(l_n)}} \m{E}\left(
1_{\Omega_n(l)}
\sum_{m=a_n(l)}^{b_n(l)} g_m \right).
\end{eqnarray}
If $l \in \overline{M_n(l_n)}$ and $k_{a_n(l)}=\omega_1$, Player 2 plays $L$ as long as $k_m=\omega_1$ and 
\\
$m \leq b_n(l)$, while Player 1 plays $(1-n^{-2}) \cdot T + n^{-2} \cdot B$. As a result, if $k_{a_n(l)}=\omega_1$, the probability that the state remains in $\omega_1$ until stage $b_n(l)$ is 
\\
$\alpha_n:=(1-n^{-2})^n$. Thus, the last inequality yields
\begin{eqnarray} \label{payoff11}
\m{E}\left(\sum_{m=1}^{b_n(l_n)} \pi^n_m g_m \right) &\geq&
n^{-3} \card \overline{M_n(l_n)} \m{P}(\Omega_n) \alpha_n
\\
&\geq& (n^{-3}(l_n+1)-\delta) \m{P}(\Omega_n) \alpha_n \label{payoff1}.
\end{eqnarray}
\begin{case} $l_n=n^3-1$. \end{case}
By (\ref{payoff11}) and Lemma \ref{lemma11}, there exists $\bar{n} \in \m{N}^*$ such that for all $n \geq \bar{n}$ verifying $l_n=n^3-1$, 
\begin{equation} \label{case1}
\gamma^{\omega_1}_{\pi^n}(\sigma^n,\tau^n) \geq 1-2\delta .
\end{equation}
\begin{case} $l_n<n^3-1$. \end{case}
Let $n \in \m{N}^*$ such that $l_n<n^3-1$. In particular, $\left|M_n(l_n) \right| \geq \delta n^3-1$ and $\left|\overline{M_n(l_n)} \right| \leq l_n-\delta n^3+2$.

We are going to show the following inequality: 
\begin{equation} \label{lemma}
\m{P}(k_{b_n(l_n)}=1^*) \geq \m{P}(\Omega_n)-\left(1-n^{-2}\left(1-n^{-2}\right)^n\right)^{\delta n^3 -1}:=\beta_n .
\end{equation}
The idea is the following.  Each time Player 2 plays $R$ in state $\omega_1$, the state goes to $1^*$ with probability $n^{-2}$. If $k_{a_n(l)}=\omega_1$ and $l \in M_n(l)$, then at each stage $m \in \left\{a_n(l),a_n(l)+1,...,m_n(l)-1 \right\}$, Player 2 will play $L$, hence at each of these stages the state will remain in $\omega_1$  with probability $n^{-2}$. Since $m_n(l)-a_n(l) \leq n$, with high probability $k_{m_n(l)}=\omega_1$. At stage $m_n(l)$, Player 2 plays $R$. Thus with high probability, conditionnal to the event $\Omega_n(l_n)$, before stage $b_n(l_n)$ Player 2 has played more than $\delta n^3-1$ times the action $R$ in state $\omega_1$, leading the state to be absorbed in $1^*$ before stage $b_n(l_n)$ with high probability. 

Formally, if $l \in \left\{0,1,...,l_n \right\}$, we have
\begin{eqnarray} 
\nonumber
\m{P}(\left\{k_{b_n(l)} \neq 1^* \right\} \cap \Omega_n(l))&=&
\m{P}(\left\{k_{b_n(l)} \neq 1^* \right\}  \cap \left\{k_{a_n(l)} = \omega_1 \right\} \cap \Omega_n(l))
\\
\nonumber
&=&\m{P}(\left\{k_{b_n(l)} \neq 1^*\right\} |\left\{k_{a_n(l)} = \omega_1 \right\}\cap \Omega_n(l))
\\
\label{prod}
&\times& \m{P}(\left\{k_{a_n(l)} = \omega_1 \right\} \cap \Omega_n(l)).
\end{eqnarray}
First we majorize the first term $P_1:=\m{P}(k_{b_n(l)} \neq 1^*|\left\{k_{a_n(l)} = \omega_1 \right\} \cap \Omega_n(l))$. If  $l \notin M_n(l)$, we simply majorize it by 1. Assume now that $l \in M_n(l)$. 
By the Markov property ($\sigma^n$ and $\tau^n$ are Markovian strategies), we have
\begin{eqnarray*}
P_1
&=& \m{P}(k_{b_n(l)} \neq 1^*|\left\{k_{a_n(l)}=\omega_1 \right\})
\\
&=&\m{P}(\left\{k_{b_n(l)} \neq 1^*\right\} \cap \left\{k_{m_n(l)}=\omega_1\right\}|\left\{k_{a_n(l)}=\omega_1 \right\})
\\
&+& \m{P}(\left\{k_{b_n(l)} \neq 1^*\right\} \cap \left\{k_{m_n(l)} \neq \omega_1\right\}|\left\{k_{a_n(l)}=\omega_1 \right\}).
\end{eqnarray*}
Let $P_3:=\m{P}(k_{m_n(l)} \neq \omega_1 | k_{a_n(l)}=\omega_1)$. The last equality and the Markov property give
\begin{eqnarray} 
\nonumber
P_1 &\leq& \m{P}(k_{b_n(l)} \neq 1^*  | \left\{k_{m_n(l)}=\omega_1 \right\} \cap \left\{k_{a_n(l)}=\omega_1 \right\}) (1-P_3)+P_3
\\
\label{inprob1}
&=& \m{P}(k_{b_n(l)} \neq 1^*  | k_{m_n(l)}=\omega_1) (1-P_3)+P_3.
\end{eqnarray}
If $k_{m_n(l)}=\omega_1$, then at stage $m_n(l)$ Player 2 plays the action $R$, hence the state is absorbed in $1^*$ with probability $\displaystyle n^{-2}$. Thus 
\begin{equation} \label{inprob2}
\m{P}(k_{b_n(l)} \neq 1^*  | k_{m_n(l)}=\omega_1) \leq 1-n^{-2}.
\end{equation}
If $k_{a_n(l)}=\omega_1$, then at each stage $m \in \left\{a_n(l),a_n(l)+1,...,m_n(l)-1 \right\}$, Player 2 will play $L$, hence at each stage the state will remain in $\omega_1$  with probability $1-n^{-2}$, and $m_n(l)-a_n(l) \leq n$. We deduce that
\begin{equation} \label{inprob3}
P_3 \leq 1-\left(1-n^{-2} \right)^n.
\end{equation}
Combining (\ref{inprob1}), (\ref{inprob2}) and (\ref{inprob3}) gives
\begin{eqnarray}
\nonumber
P_1 &\leq& \left(1-n^{-2}\right)(1-P_3)+P_3
\\ \nonumber
&=&1+ n^{-2}(P_3-1)
\\
\label{P1m}
&\leq& 1-n^{-2}\left(1-n^{-2}\right)^n.
\end{eqnarray}
As for the second term in (\ref{prod}), we have
\begin{equation}
\label{P2m}
\m{P}(\left\{k_{a_n(l)} = \omega_1 \right\} \cap \Omega_n(l)) 
\leq \m{P}(\left\{k_{b_n(l-1)} \neq 1^* \right\} \cap \Omega_n(l-1)).
\end{equation}
Combining (\ref{prod}), (\ref{P1m}) and (\ref{P2m}), we deduce that if $l \in M_n(l)$, then
\begin{equation*}
\m{P}(\left\{k_{b_n(l)} \neq 1^* \right\} \cap \Omega_n(l)) \leq \left(1-n^{-2}\left(1-n^{-2}\right)^n \right) \m{P}(\left\{k_{b_n(l-1)} \neq 1^* \right\} \cap \Omega_n(l-1)).
\end{equation*}
Because $\left|M_n(l_n) \right| \geq \delta n^3-1$, by induction we obtain
\begin{equation*}
\m{P}(\left\{k_{b_n(l_n)} \neq 1^* \right\} \cap \Omega_n(l_n)) \leq \left(1-n^{-2}\left(1-n^{-2}\right)^n\right)^{\delta n^3 -1},
\end{equation*}
and inequality (\ref{lemma}) follows.
Now we can minorize the other part of the payoff:
\begin{eqnarray}
\nonumber
\m{E}\left(\sum_{m \geq b_n(l_n)+1} \pi^n_m g_m \right) &\geq &
\m{E}\left(1_{\left\{k_{b_n(l_n)}=1^* \right\}}\sum_{m \geq b_n(l_n)+1}\pi^n_m \right)
\\ 
\nonumber
&=&n^{-3}(n^3-l_n-1) \m{P}(\left\{k_{b_n(l_n)}=1^*\right\})
\\
\label{payoff2}
&\geq & \left(1-n^{-3}(l_n+1) \right) \beta_n.
\end{eqnarray}
Inequalities (\ref{payoff1}) and (\ref{payoff2}) yield
\begin{eqnarray*}
\m{E}\left(\sum_{m \geq 1} \pi^n_m g_m \right) &=&
\m{E}\left(\sum_{m=1}^{b_n(l_n)} \pi^n_m g_m \right)+\m{E}\left(\sum_{m \geq b_n(l_n)+1} \pi^n_m g_m \right)
\\
&\geq & (n^{-3}(l_n+1)-\delta) \m{P}(\Omega_n) \alpha_n+ \left(1-n^{-3}(l_n+1) \right) \beta_n.
\end{eqnarray*}
\\
The sequences $(\alpha_n)_{n \geq 1}$, $(\beta_n)_{n \geq 1}$ and $(\m{P}(\Omega_n))_{n \geq 1}$ converge to $1$, thus there exists $n_1 \in \m{N}^*$ such that for all $n \geq n_1$ verifying $l_n<n^3-1$, we have
\begin{equation} \label{case2}
v_{\pi^n}(\omega_1) \geq \gamma^{\omega_1}_{\pi^n}(\sigma^n,\tau^n) \geq 1-2\delta.
\end{equation}

Because $\tau^n$ is a best-response strategy to $\sigma^n$ in $\Gamma^{\omega_1}_{\pi_n}$,  inequalities (\ref{case1}) and (\ref{case2}) show that for $n \geq \max(\bar{n},n_1)$, we have
\begin{equation} \label{minpayoff}
v_{\pi^n}(\omega_1) \geq \gamma^{\omega_1}_{\pi^n}(\sigma^n,\tau^n) \geq 1-2\delta.
\end{equation}
Because $\delta \in (0,1]$ is arbitrary, 
the sequence $(v_{\pi^n}(\omega_1))_{n \geq 1}$ converges to $1$, and $\Gamma$ has no $1$-asymptotic value. 
$\squareforqed$
\end{proof}
Now we can prove Theorem \ref{cep}.
\begin{proof}[Proof of Theorem \ref{cep}]
Let $\epsilon>0$ and $p:=1/2+\epsilon$. Proving that $\Gamma$ has no $p$-asymptotic value proceeds in the same way as previously. The only difference is that the sequence of weights $(\pi^n)$ has to be modified.
Let $\epsilon>0$ and $n \in \m{N}^*$. In what follows, the integer part of a real number $x$ is denoted by $\left\lfloor x \right\rfloor$. 
Define two integers $N_1$ and $N_2$ by
\begin{equation*}
N_1:=\lfloor n^{2-\epsilon} \rfloor \quad \text{and} \quad N_2:=\lfloor n^{2+\epsilon} \rfloor.
\end{equation*}
For $l \in \left\{0,1,...,N_2-1 \right\}$, let $a_n'(l):=l(N_1+n^5)+1$ and $b_n'(l):=l(N_1+n^5)+N_1$. Let 
\begin{equation*}
E'_1:=\cupp_{l \in \left\{0,1,...,N_2 \right\}} \left\{a_n'(l),a_n'(l)+1,...,b_n'(l) \right\}.
\end{equation*}
Let $\pi'^n \in \Delta(\m{N}^*)$ defined in the following way: for $m \in \m{N}^*$,

\begin{equation*}
\pi'^n_m:= \left\{
\begin{array}{ll}
n^{-4}  & \mbox{if} \ \ m \in E'_1 \setminus \left\{N_1+1 \right\}, \\
1-\displaystyle \sum_{m \neq N_1+1} \pi'^n_m & \mbox{if} \ \ m=N_1+1, \\
0 & \mbox{if} \ \ m \notin E_1'.
\end{array}
\right.
\end{equation*}
We have
\begin{equation*}
I_{p}(\pi'^n) \leq \lfloor n^{2+\epsilon} \rfloor n^{-4 (1/2+\epsilon)}+2 \pi'^n_{N_1+1}.
\end{equation*}
Hence $\displaystyle \lim_{n \rightarrow+\infty} I_p(\pi'^n)=0$. We claim that $\displaystyle \lim_{n \rightarrow +\infty} v_{\pi'^n}(\omega_1)=1$. The proof is the same as above. We still consider the same strategy $\sigma^n$ for Player 1 in $\Gamma^{\pi'^n}$.  Lemma \ref{lemma11} is still true. 
Indeed, the length of the blocks 
\\
$\left\{b'_n(l)+1,b'_n(l)+2,...,a'_n(l+1)-1 \right\}$ is still $n^5$.
\\
Now let us check the remainder of the proof. The quantity $(1-n^{-2})^{n^{2-\epsilon}}$ goes to 1 as $n$ goes to infinity. Hence if $l \in \left\{0,...,N_2 \right\}$ and $k_{l(N_1+n^5)+1}=\omega_1$, 
to get a good payoff between stage $a'_n(l)$ and stage $b'_n(l)$, Player 2 should make the state move from $\omega_1$ to $\omega_2$ at least before stage $b_n(l)$. Thus, he has to play $R$ at least one time, and take a risk of being absorbed in $\omega_1$ of $n^{-2}$. There are approximately $n^{2+\epsilon}$ such blocks. Since $(1-n^{-2})^{n^{2+\epsilon}}$ goes to $0$ as $n$ goes to infinity, the same proof as before shows that, when $n$ goes to infinity, 
the sequence $(v_{\pi^n}(\omega_1))_{n \geq 1}$ converges to $1$.
$\squareforqed$
\end{proof}

\section{Uniform approach}
To relax the assumption that sequences of weights are decreasing in Corollary \ref{MNd}, the simplest sequences of weights one can imagine are the $\pi^{l,n}$ defined in Remark \ref{rq}: $\displaystyle \pi^{l,n}:=n^{-1} 1_{l+1 \leq m \leq l+n}$. As we have seen, Theorem \ref{ex} shows that for any stochastic game, 
\begin{equation*}
\lim_{n \rightarrow +\infty} \sup_{l \in \m{N}} \left\|v_{\pi^{l,n}}-v_{\infty} \right\|_{\infty}=0.
\end{equation*}
Is it possible to show the existence of strategies that are approximately optimal in any game $\Gamma_{\pi^{l,n}}$, for any $l \geq 0$ and $n$ big enough, for both players? We provide an example of an absorbing game where this property does not hold. Thus, no natural extension of Theorem \ref{MNd} to sequences of weights which are not decreasing seems to exists.
\\

Consider the following absorbing game, introduced by \cite{gillette57} under the name of "Big Match".
The state space is $K=\left\{\omega,1^*,0^* \right\}$, where $1^*$ (resp. $0^*$) is an absorbing state with payoff 1 (resp. 0).
Action sets for Player 1 and 2 are respectively $I=\left\{T,B \right\}$ and $J=\left\{L,R \right\}$.
The payoff and transition functions in state $\omega$ are described by the following table:
\begin{figure}[H]
\centering
\caption{Transition and payoff functions in state $\omega$}
   \vspace{0.4cm}
\begin{tabular}{|l|*{3}{c|}}
\hline \backslashbox{J1}{J2}
& $L$ & $R$ \\\hline
$T$ & $1^*$ & $0^*$ \\\hline
$B$ & $0$ & $1$ \\\hline
\end{tabular}
\end{figure}

As any stochastic game, the Big Match has a uniform value $v_{\infty}$, and $v_{\infty}(\omega)=1/2$ (see \cite[Chapitre 5, p.93]{S02b}). The stationary strategy $\displaystyle 1/2 \cdot L+1/2 \cdot R$ is a $0$-optimal uniform strategy for Player $2$. Constructing $\epsilon$-optimal uniform strategy for Player 1 is more tricky (see \cite{BF68}).

Now we investigate the general uniform approach in the Big Match. 
\begin{definition}
Let $\Gamma$ be a stochastic game, and $k_1$ the initial state. Player 1 can guarantee uniformly \textit{in the general sense} $\alpha \in \m{R}$ in $\Gamma^{k_1}$ if for all $\epsilon>0$, there exists $\sigma^* \in \Sigma$ and $N_1 \in \mathbb{N}^*$ such that for all $\tau \in \mathcal{T}$, $n \geq N_1$ and $\bar{n} \in \mathbb{N}$, we have
\begin{equation} \label{gar}
\mathbb{E}^{k_1}_{\sigma^*,\tau}\left(\frac{1}{n}\sum_{m=\bar{n}+1}^{\bar{n}+n} g_m \right) \geq \alpha-\epsilon.
\end{equation}
\end{definition}
First, let us explain why Player 1 can not guarantee uniformly more than 0 in the general sense. Assume the contrary: Player 1 can guarantee uniformly in the general sense $\alpha>0$. Let $(N_1,\sigma^*) \in \m{N}^* \times \Sigma$ corresponding to $\displaystyle \epsilon=\alpha/2$ in (\ref{gar}).
The stationary strategy $\alpha/10 \cdot L+(1-\alpha/10) \cdot R$ is denoted by $y$.
\\
On the one hand, Player 1 should not play $T$ at any stage of the game, against the strategy $y$. On the other hand, in the infinitely repeated game, with high probability Player $2$ will play $L$ at $N_1$ random consecutive stages.
At that point, Player 1 does not know if Player $2$ has switched to a pure strategy that plays $L$ at any stage, or if he still plays $y$. In the first case, Player $1$ should play $T$ at least one time during these $N_1$ stages, but in the second case, he should not play $T$. Thus, he cannot guarantee a good payoff against both strategies. This provides intuition for the following proposition:
\begin{proposition}
Player 1 cannot guarantee uniformly more than 0 in the general sense.
\end{proposition}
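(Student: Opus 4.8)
The plan is to reduce Player 1's behaviour to a single \emph{stopping decision} and then exploit it with a strategy of Player 2 that is patient before the window and aggressive inside it. The first observation is that in state $\omega$ the play can leave $\omega$ only when Player 1 plays $T$: both $(B,L)$ and $(B,R)$ keep the state at $\omega$, while $(T,L)$ and $(T,R)$ absorb in $1^{*}$ and $0^{*}$ respectively. Hence, against any strategy of Player 2, the play induced by $\sigma^{*}$ is governed by a single randomized stopping time $\theta$ (the first stage at which Player 1 plays $T$, possibly $+\infty$): before $\theta$ he plays $B$ and the state stays at $\omega$; at $\theta$ the state is absorbed in $1^{*}$ or $0^{*}$ according to whether Player 2 plays $L$ or $R$. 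This reformulation is what makes the rest quantitative.

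Then I would assume for contradiction that Player 1 guarantees some $\alpha>0$ uniformly in the general sense, fix $\epsilon=\alpha/2$, and let $(\sigma^{*},N_{1})$ be as in the definition. I would test $\sigma^{*}$ against the family of strategies $\tau^{*}=\tau^{*}_{\bar n,n}$ that plays the stationary strategy $y=\tfrac{\alpha}{10}\cdot L+\big(1-\tfrac{\alpha}{10}\big)\cdot R$ on stages $1,\dots,\bar n$ and then plays $L$ throughout the window $[\bar n+1,\bar n+n]$. Writing $d_{i}:=\mathbb{P}^{\omega}_{\sigma^{*},\tau^{*}}(\theta=\bar n+i)$, decomposing the windowed payoff according to the state at stage $\bar n+1$ gives
\[
\mathbb{E}^{\omega}_{\sigma^{*},\tau^{*}}\Big(\tfrac1n\textstyle\sum_{m=\bar n+1}^{\bar n+n}g_{m}\Big)\;\le\;\mathbb{P}(\text{absorbed in }1^{*}\text{ before }\bar n+1)+\sum_{i=1}^{n}d_{i}\,\tfrac{n-i+1}{n},
\]
because on $\{0^{*}\}$ the window payoff is $0$, and on $\{\omega\}$ a first $T$ at window-position $i$ sends the play to $1^{*}$ and contributes exactly $\tfrac{n-i+1}{n}$. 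Since under $y$ the action faced at the stopping time is $L$ with probability only $\alpha/10$ (the stopping time is measurable with respect to the past, and the current action of Player 2 is independent of it), the first term equals $\tfrac{\alpha}{10}\,\mathbb{P}_{y}(\theta\le \bar n)\le \alpha/10$.

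The heart of the argument is to control the $d_{i}$ for a late window, via an indistinguishability/change-of-measure estimate. For fixed $i$, the event $\{\theta=\bar n+i\}$ under $\tau^{*}$ coincides with $\{\theta=\bar n+i,\ j_{\bar n+1}=\dots=j_{\bar n+i}=L\}$, and on stages $\bar n+1,\dots,\bar n+i$ the action $L$ has probability $1$ under $\tau^{*}$ but probability $\alpha/10$ under $y$, while Player 1's conditional behaviour depends only on the (identical) observed history. Cancelling the common factors yields $d_{i}\le(10/\alpha)^{i}\,\mathbb{P}_{y}(\theta=\bar n+i)$. Because $\sum_{m\ge1}\mathbb{P}_{y}(\theta=m)\le 1$, the tail $\mathbb{P}_{y}(\theta=\bar n+i)$ tends to $0$ as $\bar n\to\infty$ for each fixed $i$, hence $d_{i}\to0$. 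Splitting $\sum_{i=1}^{n}d_{i}\tfrac{n-i+1}{n}$ into the $n-1$ fixed terms with $i\le n-1$ (each $\to0$) and the last term $d_{n}\tfrac1n\le\tfrac1n$, I obtain $\limsup_{\bar n\to\infty}\mathbb{E}^{\omega}_{\sigma^{*},\tau^{*}}(\cdot)\le \alpha/10+1/n$. Choosing once and for all $n\ge N_{1}$ with $1/n<2\alpha/5$ makes this limit strictly below $\alpha/2=\alpha-\epsilon$, so some window $[\bar n+1,\bar n+n]$ violates the guarantee, which is the desired contradiction.

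The step I expect to be delicate is exactly this change-of-measure bound on $d_{i}$, which formalises the informal dichotomy of the introduction (against $y$ Player 1 must refrain from $T$, yet during a long block of $L$'s he cannot afford to keep refraining). The subtlety is that the forced block of $L$'s produces histories that are \emph{atypical} under $y$, so one cannot directly transfer a ``small hazard'' statement; the likelihood-ratio factor $(10/\alpha)^{i}$ is large but harmless because it is frozen once $i$ is fixed, and is defeated by the vanishing tail of $\theta$ as the window recedes to infinity. Keeping the order of quantifiers right, namely fixing $n$ first and only then sending $\bar n\to\infty$ so that only finitely many $d_{i}$ matter, is what makes the estimate close.
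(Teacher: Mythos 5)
Your proof is correct, and while it starts from the same place as the paper's proof --- the contradiction setup with $\epsilon=\alpha/2$, the test strategy $y=\frac{\alpha}{10}\cdot L+\left(1-\frac{\alpha}{10}\right)\cdot R$, and the dichotomy that against $y$ Player 1 must not play $T$ while inside a long block of $L$'s he must --- the technical execution is genuinely different. The paper realizes the block of $L$'s at a \emph{random} time: it fixes $N_0$ so that the stopping mass of $\sigma^*$ under $y$ after stage $N_0$ is at most $\alpha/10$ (stabilization of $\mathbb{P}_{\sigma^*,y}(A_n)$), waits for the first run of $N_1$ consecutive $L$'s, which under the i.i.d.\ strategy $y$ occurs with probability at least $1/2$ before some stage $N_2$ (inequality (\ref{LGN})), introduces history-dependent deviations $y'(h)$ that switch to all-$L$ at the run onset, and transfers the stopping probability $2\alpha/5$ forced by (\ref{gar}) under $y'(h)$ back to the measure $\mathbb{P}_{\sigma^*,y}$ via the coupling inequality (\ref{abs}), contradicting the choice of $N_0$. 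You instead place the all-$L$ block deterministically in the window $[\bar n+1,\bar n+n]$, and you replace the paper's two probabilistic ingredients (occurrence of runs, stabilization of $\mathbb{P}(A_n)$) by a single one: the likelihood-ratio bound $d_i\le(10/\alpha)^i\,\mathbb{P}_{\sigma^*,y}(\theta=\bar n+i)$ together with summability of the distribution of $\theta$ under $(\sigma^*,y)$, so that with $n$ fixed all window terms vanish as $\bar n\to\infty$ and the contradiction is read directly off the payoff. Both change-of-measure steps hinge on the same fact (Player 1's conditional behaviour is identical on identical histories), but yours goes in the cheap direction --- from the forced-$L$ measure to $y$, at the cost of the factor $(10/\alpha)^i$, harmless because $i\le n$ is frozen before $\bar n\to\infty$ --- whereas the paper's inequality (\ref{abs}) must go the other way, with no constant, at a random onset; that is the tersest and most delicate step of the published proof, so your route is arguably easier to make fully rigorous. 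One small remark: your split at $i=n$ is unnecessary, since $d_n\le(10/\alpha)^n\,\mathbb{P}_{\sigma^*,y}(\theta=\bar n+n)\to 0$ as well; hence your limsup is at most $\alpha/10$, any $n\ge N_1$ works, and the constraint $1/n<2\alpha/5$ can be dropped.
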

\begin{remark}
This proposition shows in particular that Theorem \ref{RV} (Renault and Venel) does not generalize to zero-sum stochastic games, even for absorbing games: the game $\Gamma$ has no general uniform value.
\end{remark}
\begin{proof}
The same notations as in the above discussion are used. 

For $n \in \m{N}^*$, let $A_n$ be the event $\left\{\text{Player 1 plays $T$ before stage $n$} \right\}$, and let $\overline{A_n}$ be the complement of $A_n$. The sequence $\left(\m{P}_{\sigma^*,y}(A_n)\right)_{n \geq 1}$ is increasing and bounded by 1, therefore it converges to some $l \in [0,1]$. Let $N_0 \in \m{N}^*$ such that for all $n \geq N_0$, 
\begin{equation*}
\m{P}_{\sigma^*,y}(A_n) \geq l-\alpha/10.
\end{equation*}
To avoid confusion, in what follows $h$ denotes an element of $H_{\infty}$, and $\widetilde{h}$ denotes the random variable with values in $H_{\infty}$ describing the infinite history of the game.
Let $n \geq N_0+N_1$. Let $H^n \subset H_{\infty}$ be the following set:
\begin{equation*}
\left\{h \in H_{\infty}  |  \exists \ a(h) \in \left\{N_0,...,n-N_1 \right\}, \ \forall m \in \left\{a(h)+1,...,a(h)+N_1\right\},  j_m=L \right\}.
\end{equation*}

There exists $N_2 \in \m{N}^*$ such that 
\begin{equation} \label{LGN}
\mathbb{P}_{\sigma^*,y}\left(\widetilde{h} \in H^{N_2}\right) \geq 1/2. 
\end{equation}
We have
\begin{eqnarray}
\label{minprob}
\m{P}_{\sigma^*,y}(\overline{A_{N_0}} \cap A_{N_2}) 
\label{abs}
&\geq & \m{E}_{\sigma^*,y}\left(1_{\left\{\widetilde{h} \in H^n \right\}} \m{P}_{\sigma^*,y'(\widetilde{h})}(\overline{A_{N_0}} \cap A_{N_2}) \right),
\end{eqnarray}
where for $h \in H^n$, the strategy $y'(h)$ is the Markov strategy equal to $y$ between stages $1$ and $a(h)$, and equal to $j_m(h)$ for each stage $m \geq a(h)+1$. 
\\
Let $h \in H^n$.
Let us now minorize $\m{P}_{\sigma^*,y'(h)}(\overline{A_{N_0}} \cap A_{N_2})$. 
\\
If Player 1 plays $T$ at some stage $m \leq a(h)$ against the strategy $y'(h)$, the game is absorbed in $1^*$ with probability $\displaystyle \alpha/10$, and in $0^*$ with probability $\displaystyle 1-\alpha/10$. Therefore we have
\begin{equation} \label{pay1}
\m{E}_{\sigma^*,y'(h)}\left(1_{A_{N_0}} \frac{1}{N_1} \sum_{m=a(h)+1}^{a(h)+N_1} g_m \right) \leq \frac{\alpha}{10}.
\end{equation}
Because $h \in H^n$, we have
\begin{equation} \label{pay2}
\m{E}_{\sigma^*,y'(h)}\left(1_{\overline{A_{N_2}}} \frac{1}{N_1} \sum_{m=a(h)+1}^{a(h)+N_1} g_m \right)=0.
\end{equation}
Combining (\ref{pay1}) and (\ref{pay2}), we obtain
\begin{eqnarray*}
\m{E}_{\sigma^*,y'(h)} \left(\frac{1}{N_1} \sum_{m=a(h)+1}^{a(h)+N_1} g_m \right)
&\leq& \frac{\alpha}{10}+\m{P}_{\sigma^*,y'(h)}(\overline{A_{N_0}} \cap A_{N_2}),
\end{eqnarray*}
and by (\ref{gar}), 
\begin{equation*}
\m{P}_{\sigma^*,y'(h)}(\overline{A_{N_0}} \cap A_{N_2}) \geq \alpha/2 - \alpha/10= 2 \alpha/5.
\end{equation*}
Plugging the last inequality into (\ref{minprob}) and using $(\ref{LGN})$, we deduce that
\begin{eqnarray*}
\m{P}_{\sigma^*,y}(\overline{A_{N_0}} \cap A_{N_2}) &\geq &
\m{E}_{\sigma^*,y}\left(1_{\left\{h \in H^n \right\}}2 \alpha/5 \right)
\\
& \geq & \alpha/5.
\end{eqnarray*}
Because $A_{N_0} \subset A_{N_2}$, we have
\begin{equation*}
\m{P}_{\sigma^*,y}(\overline{A_{N_0}} \cap A_{N_2})=
\m{P}_{\sigma^*,y}(A_{N_2})-\m{P}_{\sigma^*,y}(A_{N_0})\leq \alpha/10,
\end{equation*}
thus $\displaystyle \alpha/10 \geq \alpha/5$, which is a contradiction.
$\squareforqed$
\end{proof}
\section*{Acknowledgments}
I would like to thank J\'{e}r\^ome Renault, Sylvain Sorin, Rida Laraki, and Fabien Gensbittel for their interesting suggestions. 
\newpage
\bibliography{biblio3}

\end{document}